\newcommand{\msc}[2][2000]{%
  \let\@oldtitle\@title%
  \gdef\@title{\@oldtitle\footnotetext{#1 \emph{Mathematics subject
        classification.} #2}}%
}
\theoremstyle{plain}
\newtheorem{theorem}{Theorem}[section]
\newtheorem{definition}[theorem]{Definition}
\newtheorem{assumption}[theorem]{Assumption}
\newtheorem{lemma}[theorem]{Lemma}
\newtheorem{corollary}[theorem]{Corollary}
\newtheorem{proposition}[theorem]{Proposition}
\theoremstyle{remark}
\newtheorem{remark}[theorem]{Remark}
\def\C{{\mathbb C}}
\def\R{{\mathbb R}}
\def\N{{\mathbb N}}
\def\F{\mathcal F}
\def\({\left(}
\def\){\right)}
\def\<{\left\langle}
\def\>{\right\rangle}
\def\le{\leqslant}
\def\ge{\geqslant}
\def\Tend#1#2{\mathop{\longrightarrow}\limits_{#1\rightarrow#2}}
\def\d{{\partial}}
\def\eps{\varepsilon}
\def\l{\lambda}
\def\om{\omega}
\def\si{{\sigma}}
\DeclareMathOperator{\RE}{Re}
\DeclareMathOperator{\IM}{Im}
\numberwithin{equation}{section}
\begin{document}

\title[Finite time extinction for  NLS]{Finite time extinction for
  nonlinear Schr\"odinger equation in 1D and 2D}

\author[R. Carles]{R\'emi Carles}
\address{CNRS \& Univ. Montpellier~2\\Math\'ematiques
\\CC~051\\34095 Montpellier\\ France}
\email{Remi.Carles@math.cnrs.fr}

\author[T. Ozawa]{Tohru Ozawa}
\address{Department of Applied Physics\\ Waseda University\\ Tokyo
  169-8555\\  Japan}
\email{txozawa@waseda.jp}

\begin{abstract}
  We consider a nonlinear Schr\"odinger equation with power
  nonlinearity, either on a compact manifold without boundary, or on
  the whole space in the presence of harmonic confinement, in space
  dimension one and two. Up to introducing an extra superlinear damping to
  prevent finite time blow up, we show that the presence of a sublinear
  damping always leads to finite time extinction of the solution in
  1D, and that the same phenomenon is present in the case of small mass
  initial data in 2D. 
\end{abstract}
\thanks{RC was supported by the French ANR projects
  SchEq (ANR-12-JS01-0005-01) and BECASIM
  (ANR-12-MONU-0007-04).} 
\maketitle

\section{Introduction}
\label{sec:intro}
In \cite{CaGa11}, the following equation was considered on a compact
manifold without boundary:
\begin{equation*}
i\d_t u +\frac{1}{2}\Delta u = -ib\frac{u}{|u|^\alpha}, \quad t\ge 0,  
\end{equation*}
for $b>0$ and $\alpha\in (0,1]$. This sublinear damping leads to finite time
extinction of the solution, that is $\|u(t)\|_{L^2}=0$ for $t\ge T$,
a phenomenon closely akin to the model involving such a damping is
mechanics \cite{AdAtCa06}. In the one-dimensional case, finite time
extinction was proved for 
\begin{equation*}
i\d_t u +\frac{1}{2}\Delta u = \l |u|^{2\si}u-ib\frac{u}{|u|^\alpha}, \quad t\ge 0,  
\end{equation*}
with $\l\in \R$ and $\si>0$, provided that finite time blow-up does
not occur in the case $b=0$, that is, either $\si<2$ or $\l\ge 0$. In
this paper, we extend this study to several directions:
\begin{itemize}
\item The two-dimensional case is considered too.
\item The space variable may belong to the whole space $\R^d$,
  provided that a confining potential is present.
\item When finite time blow-up is present without damping, we
  introduce a superlinear damping in order to prevent blow-up.
\end{itemize}
This last point is related to some conclusion from \cite{AnCaSp-p}: a
nonlinear damping term whose power is larger than that of a focusing
nonlinearity always prevents finite time blow-up.  
\smallbreak

We consider the equation
\begin{equation}
  \label{eq:main}
i\d_t u +\frac{1}{2}\Delta u = V(x)u + \l |u|^{2\si_1}u -ia
|u|^{2\si_2}u-ib\frac{u}{|u|^\alpha}, \quad t\ge 0, \ x\in M,  
\end{equation}
where $V\in C^\infty(M;\R)$ is a smooth, real-valued potential, with initial datum
\begin{equation}
  \label{eq:ci}
  u_{\mid t=0}=u_0.
\end{equation}
Throughout all this paper, we suppose that the following assumption is
satisfied. 
\begin{assumption}\label{hyp:main}
The parameters of the equation are chosen as follows: $\l\in \R$,
$a\ge 0$, $b,\si_1, \si_2>0$, and $\alpha\in [0,1]$. 
  We suppose that $M$ is $d$-dimensional, with $d=1$ or $2$.
\begin{itemize}
\item Either $M$ is a $d$-dimensional compact manifold without boundary, 
\item or $M=\R^d$, and $V$ is harmonic,
  \begin{equation*}
    V(x)=\sum_{j=1}^d \om_j^2 x_j^2, \quad \om_j>0. 
  \end{equation*}
\end{itemize}
If $M=\R^2$, we restrict the range for $\alpha$: $\alpha\in [0,\frac{1}{2}]$. 
\end{assumption}
\begin{remark}
  In the case where $M=\R^d$, we could consider more general
  potentials. Our proofs remain valid provided that $V$ is at most
  quadratic in the sense of \cite{Fujiwara}, that is:
  \begin{equation*}
    V\in C^\infty(\R^d;\R),\quad \text{with}\quad\d^\gamma V\in L^\infty(\R^d),\quad
    \forall \gamma\in \N^d,\ |\gamma|\ge 2.
  \end{equation*}
This assumption is sufficient to construct a global weak solution to
\eqref{eq:main}. We also need the potential energy to control lower
Lebesgue norms (see Lemma~\ref{lem:GNdual}), a requirement which is
satisfied provided that there exist $C,\eps>0$ such that
\begin{equation*}
  V(x)\ge C |x|^{1+\eps},\quad \forall x\in \R^d, \ |x|\ge 1. 
\end{equation*}
Among other properties, such potentials prevents global in time
dispersion (they are confining potentials). It is not clear whether
this assumption is really necessary or if it is a technical
requirement, in order for the conclusions of the present paper to hold. 
\end{remark}
The initial datum satisfies $u_0\in \Sigma$, where
\begin{equation*}
  \Sigma^k = \left\{ f\in H^k(M),\quad \|f\|_{\Sigma^k}^2:=
    \|f\|_{H^k(M)}^2 + \||x|^k f\|_{L^2(M)}^2<\infty\right\},
\end{equation*}
and we denote $\Sigma=\Sigma^1$. Note that if $M$ is
compact, we simply have $\Sigma^k=H^k(M)$, and on $M=\R^d$,
$\Sigma^k = H^k\cap \F(H^k)$, where $\F$ denotes the Fourier transform
(whose normalization is irrelevant in this definition).

\begin{definition}[Weak solution, case $0\le\alpha<1$]
Suppose $0\le\alpha <1$. A (global) weak solution to
  \eqref{eq:main}  is a function $u \in {\mathcal C}(\R_+;L^2(M))\cap
  L^\infty(\R_+; \Sigma)$ 
  solving \eqref{eq:main} in 
  ${\mathcal D}'(\R_+^*\times M)$.
\end{definition}
\begin{definition}[Weak solution, case $\alpha=1$]
  Suppose $\alpha =1$. A (global) weak solution to
  \eqref{eq:main}  is a function $u \in {\mathcal C}(\R_+;L^2(M))\cap
  L^\infty(\R_+; \Sigma)$
solving
  \begin{equation*}
  i\d_t u+\frac{1}{2}\Delta u=   V(x)u + \l |u|^{2\si_1}u -ia
|u|^{2\si_2}u-ib F  
  \end{equation*}
in ${\mathcal D}'(\R_+^*\times M)$, where $F$ is such that
\begin{equation*}
 \|F\|_{L^\infty(\R_+\times M )} \le 1,\quad \text{and}\quad
 F=\frac{u}{|u|} \text{ if } u\neq 0.
\end{equation*}
\end{definition}
\begin{theorem}\label{theo:exist}
  Let $u_0\in \Sigma$. In
  either of the following cases,
  \begin{itemize}
  \item $\si_1<2/d$,
\item or $\l\ge 0$,
\item or $\l<0$, $a>0$ and $\si_2>\si_1$,
  \end{itemize}
the Cauchy problem \eqref{eq:main}-\eqref{eq:ci} has a unique,
global, weak solution. 
\end{theorem}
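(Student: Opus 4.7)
The plan is the classical regularization–compactness scheme. First, I regularize the singular sublinear damping by replacing $u/|u|^\alpha$ by $f_\eps(u)=u/(|u|^2+\eps^2)^{\alpha/2}$, which is globally Lipschitz in $u$. The other nonlinearities $|u|^{2\si_i}u$ are locally Lipschitz from $\Sigma$ to $L^2$ via the Sobolev embedding available in dimension $d\le 2$. A standard Duhamel fixed-point argument based on the propagator of $-\tfrac12\Delta+V$ (Mehler's formula when $M=\R^d$, spectral calculus when $M$ is compact) then yields a local solution $u^\eps\in \mathcal{C}([0,T_\eps);\Sigma)$ to the regularized problem, with the $\Sigma$ regularity propagated by commuting the generator of the linear flow with $x$ and $\nabla$.

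Next I derive a priori estimates uniform in $\eps$. The mass identity
$$\frac{1}{2}\frac{d}{dt}\|u^\eps\|_{L^2}^2 = -a\|u^\eps\|_{L^{2\si_2+2}}^{2\si_2+2} - b\int \frac{|u^\eps|^2}{(|u^\eps|^2+\eps^2)^{\alpha/2}}\,dx \le 0$$
gives $\|u^\eps(t)\|_{L^2}\le\|u_0\|_{L^2}$. For the $\Sigma$ bound I use the energy
$$E(u)=\tfrac12\|\nabla u\|^2_{L^2}+\int V|u|^2\,dx+\frac{\l}{\si_1+1}\|u\|^{2\si_1+2}_{L^{2\si_1+2}},$$
whose time derivative is controlled by the damping terms. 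In the case $\si_1<2/d$, Gagliardo–Nirenberg absorbs the (possibly negative) $L^{2\si_1+2}$ contribution into $\|u\|_{L^2}$ and $\|\nabla u\|_{L^2}$; in the case $\l\ge 0$ the energy is nonnegative and, combined with the $|x|^2$ part of $V$ and with $\|u\|_{L^2}$, directly dominates $\|\cdot\|_\Sigma$; in the third case the dissipation from the superlinear damping, combined with $\si_2>\si_1$, controls the focusing energy in the spirit of \cite{AnCaSp-p}. This yields a global $\Sigma$ bound, hence global existence of $u^\eps$. Weak-$*$ compactness in $L^\infty_{\rm loc}(\R_+;\Sigma)$, together with Aubin–Lions applied with $\d_t u^\eps$ bounded in $L^\infty_{\rm loc}(\R_+;\Sigma^{-1})$ and $\Sigma\hookrightarrow L^2$ compactly (by Rellich in the compact case, by decay of Hermite eigenfunctions in the harmonic case), extracts $u^\eps\to u$ strongly in $\mathcal{C}_{\rm loc}(\R_+;L^p_{\rm loc})$ for all finite $p$, which suffices to pass to the limit in the power nonlinearities. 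For the sublinear term with $\alpha<1$, the pointwise bound $|f_\eps(u^\eps)|\le|u^\eps|^{1-\alpha}$ and dominated convergence identify the limit as $u/|u|^\alpha$ (extended by $0$ on $\{u=0\}$); for $\alpha=1$, $\|f_\eps(u^\eps)\|_{L^\infty}\le 1$ produces a weak-$*$ limit $F$ with the properties required in the definition.

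Uniqueness is obtained by considering $w=u_1-u_2$ and using the elementary monotonicity
$$\RE\left(\frac{z_1}{|z_1|^\alpha}-\frac{z_2}{|z_2|^\alpha}\right)\overline{(z_1-z_2)}\ge 0,\qquad z_1,z_2\in\C,$$
so that the sublinear damping contributes a favorable sign in the $L^2$ estimate for $w$; the power nonlinearity and the superlinear damping are locally Lipschitz $\Sigma\to L^2$ under the uniform $\Sigma$ bounds on $u_1,u_2$, and Gr\"onwall closes the argument. The main obstacle I anticipate is the uniform $\Sigma$ bound in the third case, where one must quantify how the dissipation from $a|u|^{2\si_2}u$ dominates the focusing power $\l|u|^{2\si_1}u$ when $\si_2>\si_1$; a related subtlety is that in $d=2$ the sublinear damping generates an $L^{2-\alpha}$ contribution that must be absorbed via a dual Gagliardo–Nirenberg inequality (Lemma~\ref{lem:GNdual}), and it is this bound which forces the restriction $\alpha\in[0,\tfrac12]$ there.
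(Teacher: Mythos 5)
Your scheme (regularize the sublinear damping, uniform mass/energy bounds, compactness, identification of the limit $F$, then uniqueness via the monotonicity of $z\mapsto z/|z|^\alpha$ and of the superlinear damping) is the same as the paper's, and the existence part is essentially on track. One incompleteness there: in the case $\l<0$, $a>0$, $\si_2>\si_1$ you only gesture at the mechanism ``in the spirit of AnCaSp''. The paper's actual device is the modified energy $E_k^\delta=E_0^\delta+k\|u^\delta\|_{L^{2\si_2+2}}^{2\si_2+2}$ with $0<k<2a/(\si_2(\si_2+1))$: the gradient dissipation $-2a\int|u^\delta|^{2\si_2}|\nabla u^\delta|^2$ coming from the damping absorbs the term $k\si_2(\si_2+1)\int|u^\delta|^{2\si_2}|\nabla u^\delta|^2$ produced by $\frac{d}{dt}\|u^\delta\|_{L^{2\si_2+2}}^{2\si_2+2}$ (computed through a polar factorization identity), and the leftover negative term $-ak(2\si_2+2)\int|u^\delta|^{4\si_2+2}$ dominates $4a|\l|\int|u^\delta|^{2\si_1+2\si_2+2}$ precisely because $\si_2>\si_1$ and the mass is nonincreasing. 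This quantitative step is what your sketch would have to supply.

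The genuine gap is uniqueness for $d=2$. You claim the power nonlinearity is ``locally Lipschitz $\Sigma\to L^2$ under the uniform $\Sigma$ bounds'' so that Gr\"onwall closes. This works for $d=1$ because $H^1\hookrightarrow L^\infty$, but for $d=2$ the difference term $\int(|u_1|^{2\si_1}+|u_2|^{2\si_1})|u_1-u_2|^2$ cannot be bounded by $C(\|u_1\|_{H^1},\|u_2\|_{H^1})\,\|u_1-u_2\|_{L^2}^2$: no $L^\infty$ control on $|u_j|^{2\si_1}$ follows from the $\Sigma$ bound, and the weak solutions have no extra regularity to compensate, so your Gr\"onwall argument does not close in exactly the case the theorem also covers. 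The paper resolves this with a Yudovich-type argument (Ogawa--Ozawa, Burq--G\'erard--Tzvetkov): H\"older with a large exponent $p$ gives $\dot\epsilon\le C\bigl(\|u_1\|_{L^{2p\si_1}}^{2\si_1}+\|u_2\|_{L^{2p\si_1}}^{2\si_1}\bigr)\epsilon^{1-3/(2p)}$ with $C$ independent of $p$, the Gagliardo--Nirenberg constant grows like $([p]!)^{1/p}\lesssim p$, and integrating with $\epsilon(0)=0$ and letting $p\to\infty$ forces $\epsilon\equiv 0$ on a short interval, hence everywhere by iteration. Without this (or a substitute) your uniqueness proof is incomplete in 2D. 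A minor side remark: the restriction $\alpha\le 1/2$ on $\R^2$ is not really what rescues the $L^{2-\alpha}$ term in the existence argument (Lemma~\ref{lem:GNdual} covers any $\alpha<1$ there); it is needed later, for the $\Sigma^2$ estimates where $\int|u|^{2-2\alpha}$ must be a genuine Lebesgue norm.
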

Multiplying \eqref{eq:main} by $\bar u$,
integrating over $M$ and taking the imaginary part, we obtain formally:
\begin{equation}
  \label{eq:mass}
  \frac{d}{dt}\|u(t)\|_{L^2}^2 +2a \int_M |u(t,x)|^{2\si_2+2}dx +2b
    \int_M |u(t,x)|^{2-\alpha}dx =0 . 
\end{equation}
We will check in the course of the proof of Theorem~\ref{theo:exist}
that the solution satisfies this relation indeed. 
\begin{corollary}\label{cor:one}
  Let $d=1$ and $\alpha>0$ in Assumption~\ref{hyp:main}, and $u_0\in \Sigma$. In
  either of the cases considered in Theorem~\ref{theo:exist}, there
  exists $T>0$ such that the unique weak solution to
  \eqref{eq:main}-\eqref{eq:ci} satisfies
  \begin{equation*}
    \text{for every } t\ge T,\quad \|u(t)\|_{L^2(M)}=0. 
  \end{equation*}
\end{corollary}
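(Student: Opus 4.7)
Starting from the mass identity \eqref{eq:mass}, since $a\ge 0$ one has
\[
\frac{d}{dt}\|u(t)\|_{L^2}^2 + 2b\int_M |u(t,x)|^{2-\alpha}\,dx \le 0.
\]
Write $y(t)=\|u(t)\|_{L^2}^2$. My plan is to derive a differential inequality of the form
\[
y'(t)+C\,y(t)^{1-\alpha/4}\le 0;
\]
since $\alpha>0$, the exponent $1-\alpha/4$ is strictly below $1$, and a standard ODE comparison then produces a finite extinction time $T\le 4y(0)^{\alpha/4}/(\alpha C)$, which is exactly the claim of the corollary.

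The heart of the matter is the one-dimensional lower bound
\[
\int_M|u|^{2-\alpha}\,dx\ge C\,\|u\|_{L^2}^{2-\alpha/2},
\]
valid uniformly for $u$ in a bounded subset of $\Sigma$. I would combine the elementary H\"older identity
\[
\|u\|_{L^2}^2=\int_M|u|^{2-\alpha}|u|^{\alpha}\,dx\le\|u\|_{L^\infty}^\alpha\|u\|_{L^{2-\alpha}}^{2-\alpha}
\]
with the one-dimensional Gagliardo--Nirenberg inequality
\[
\|u\|_{L^\infty}\le C\bigl(\|u\|_{L^2}^{1/2}\|\partial_x u\|_{L^2}^{1/2}+\|u\|_{L^2}\bigr).
\]
By Theorem~\ref{theo:exist}, $u\in L^\infty(\R_+;\Sigma)$, so $\|\partial_x u(t)\|_{L^2}$ is uniformly bounded, and since $y$ is nonincreasing one also has $\|u(t)\|_{L^2}\le\|u_0\|_{L^2}$ throughout. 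Using $(a+b)^\alpha\le a^\alpha+b^\alpha$ for $\alpha\in(0,1]$, then absorbing $\|u\|_{L^2}^\alpha$ into $\|u\|_{L^2}^{\alpha/2}$ thanks to the uniform $L^2$ bound, yields $\|u(t)\|_{L^\infty}^\alpha\le C'\|u(t)\|_{L^2}^{\alpha/2}$. Rearranging the H\"older estimate produces the displayed lower bound.

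Inserting this into the mass inequality gives $y'(t)\le -C''y(t)^{1-\alpha/4}$. Integrating,
\[
y(t)^{\alpha/4}\le\max\!\bigl(y(0)^{\alpha/4}-\tfrac{\alpha C''}{4}\,t,\,0\bigr),
\]
so $y\equiv 0$ for $t\ge T:=4y(0)^{\alpha/4}/(\alpha C'')$, establishing the corollary. The principal technical obstacle is implementing the Gagliardo--Nirenberg step uniformly in both frames of Assumption~\ref{hyp:main}: on a compact one-manifold it is classical with the above additive correction; on $\R$ with harmonic confinement one must additionally invoke that the confining potential energy built into $\|u\|_\Sigma$ controls the low Lebesgue norm $\|u\|_{L^{2-\alpha}}$ (Lemma~\ref{lem:GNdual}), in order to ensure that $\int|u|^{2-\alpha}\,dx$ is finite and enters the above chain with constants depending only on the uniform $\Sigma$-bound.
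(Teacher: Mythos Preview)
Your argument is correct and is essentially the paper's proof: the paper invokes the Nash-type inequality of Lemma~\ref{lem:Nash} (with $d=1$), namely $\|u\|_{L^2}^{4-\alpha}\le C\bigl(\|u\|_{L^{2-\alpha}}^{2-\alpha}\bigr)^2\|u\|_{H^1}^{\alpha}$, to pass from \eqref{eq:mass} to $\dot y+Cb\,y^{1-\alpha/4}\le 0$ and then integrates. Your H\"older/Gagliardo--Nirenberg chain $\|u\|_{L^2}^2\le\|u\|_{L^\infty}^\alpha\|u\|_{L^{2-\alpha}}^{2-\alpha}$ together with $\|u\|_{L^\infty}\lesssim\|u\|_{L^2}^{1/2}\|u\|_{H^1}^{1/2}$ is precisely a proof of that Nash inequality, so the two routes coincide; the closing remark about Lemma~\ref{lem:GNdual} is unnecessary for the lower bound itself (it only enters earlier, to justify \eqref{eq:mass}).
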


\begin{theorem}\label{theo:two}
  Let $d=2$ in Assumption~\ref{hyp:main}, and
  $u_0\in \Sigma$.\\
(1) In
  either of the cases considered in Theorem~\ref{theo:exist}, there
  exists $C>0$ such that the solution to \eqref{eq:main}-\eqref{eq:ci}
  satisfies
  \begin{equation*}
    \|u(t)\|_{L^2(M)}\le \|u_0\|_{L^2(M)} e^{-Ct},\quad t\ge 0.
  \end{equation*}
(2) If in addition $u_0\in \Sigma^2$, then $u\in
L^\infty(\R_+;\Sigma^2)$. If $1/2\le\si_1\le 3/2$, then for
any $R>0$, there exists $\eta_R>0$ such that if $\|u_0\|_{\Sigma^2}\le
R$ and 
$\|u_0\|_{L^2}\le \eta_R$, then there exists $T>0$ such that for every
$t\ge T$,  $\|u(t)\|_{L^2(M)}=0$. 
 \end{theorem}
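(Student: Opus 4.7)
My plan is to prove Part~(1) via a Nash-type Gagliardo-Nirenberg inequality, and Part~(2) via propagation of $\Sigma^2$-regularity combined with an improved dissipation inequality.

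For Part~(1), the key tool is the two-dimensional Nash-type inequality
\begin{equation*}
\|u\|_{L^2(M)}^2 \le C \|\nabla u\|_{L^2(M)}^{\alpha} \|u\|_{L^{2-\alpha}(M)}^{2-\alpha},
\end{equation*}
which is the $d=2$ Gagliardo-Nirenberg interpolation (with a $\Sigma$-weighted variant in the case $M=\R^2$, accessible through Lemma~\ref{lem:GNdual}; the restriction $\alpha\le 1/2$ on $\R^2$ is precisely what makes the inequality applicable). From Theorem~\ref{theo:exist}, $u\in L^\infty(\R_+;\Sigma)$, so $\|\nabla u(t)\|_{L^2}\le K$ uniformly in $t$. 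Hence $\int_M|u(t)|^{2-\alpha}\,dx \ge c\|u(t)\|_{L^2}^2$, and feeding this into the mass balance \eqref{eq:mass} gives $\frac{d}{dt}\|u\|_{L^2}^2+2bc\|u\|_{L^2}^2\le 0$, whence the exponential decay.

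For Part~(2), I first propagate $\Sigma^2$-regularity by applying $\Delta$ (and, on $\R^2$, multiplication by $\langle x\rangle^2$) to \eqref{eq:main} and forming the standard energy estimate. The non-smooth sublinear damping is handled by the regularization $F_\eps = u(|u|^2+\eps)^{-\alpha/2}$ followed by a limit passage. The condition $\si_1\ge 1/2$ makes $|u|^{2\si_1}u$ differentiable as a function of $(u,\bar u)$, so its $H^2$-contribution is defined; $\si_1\le 3/2$ lets the nonlinear term be controlled using the 2D embedding $H^2\hookrightarrow L^\infty$ together with Gagliardo-Nirenberg interpolation; the $L^2$-smallness $\|u_0\|_{L^2}\le\eta_R$ is exactly what is needed to absorb the nonlinear term and close the resulting Gronwall inequality, producing $\|u(t)\|_{\Sigma^2}\le K_2$ uniformly in $t\ge 0$.

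With the $\Sigma^2$-bound in hand, the 2D Gagliardo-Nirenberg embedding $\|u\|_{L^\infty}\le C\|u\|_{L^2}^{1/2}\|u\|_{H^2}^{1/2}$ yields $\|u(t)\|_{L^\infty}\le C K_2^{1/2}\|u(t)\|_{L^2}^{1/2}$: crucially, the $L^\infty$ bound now vanishes with the mass. Combined with the pointwise inequality $\|u\|_{L^2}^2=\int_M|u|^\alpha\,|u|^{2-\alpha}\,dx \le \|u\|_{L^\infty}^\alpha\int_M|u|^{2-\alpha}\,dx$, this gives $\int_M|u|^{2-\alpha}\,dx\ge c_1\|u\|_{L^2}^{2-\alpha/2}$, so \eqref{eq:mass} reduces to $Y'\le -c_2 Y^{1-\alpha/4}$ for $Y=\|u\|_{L^2}^2$. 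Since $1-\alpha/4<1$, integration yields $Y(t)^{\alpha/4}\le Y_0^{\alpha/4}-c_3 t$, and therefore $Y(t)\equiv 0$ for $t\ge T:=Y_0^{\alpha/4}/c_3$. The main obstacle throughout is the uniform-in-time $\Sigma^2$ propagation in the preceding paragraph: the constraints $\si_1\in[1/2,3/2]$ and $\|u_0\|_{L^2}\le\eta_R$ are dictated precisely by the need to close the nonlinear energy estimate.
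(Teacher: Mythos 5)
Your Part~(1) follows the paper's own route: the $d=2$ case of the Nash-type inequality \eqref{eq:nash1}, i.e. $\|u\|_{L^2}^2\lesssim \|u\|_{H^1}^{\alpha}\|u\|_{L^{2-\alpha}}^{2-\alpha}$, combined with the uniform $H^1$ (resp. $\Sigma$) bound from Theorem~\ref{theo:exist} and the mass identity \eqref{eq:mass}. That part is fine.

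Part~(2) has a genuine gap at its central step: the assertion that a ``standard energy estimate,'' closed by Gronwall using the smallness of $\|u_0\|_{L^2}$, yields a \emph{uniform-in-time} bound $\|u(t)\|_{\Sigma^2}\le K_2$. You give no mechanism for this, and it is exactly the hard point. In 2D the nonlinear contributions to the second-order energy are of the size $\|u\|_{L^\infty}^{2\si_1-1}\mathcal E_2$, and since $H^1\not\hookrightarrow L^\infty$ there is no Gronwall closure with a constant depending only on the conserved quantities; smallness of $\|u_0\|_{L^2}$ with $\|u_0\|_{\Sigma^2}\le R$ arbitrary does not let you absorb these terms. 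Moreover, differentiating $u/|u|^\alpha$ twice in space, as you propose, produces weights $|u|^{-\alpha-2}$ against squared gradients with no favorable sign; the paper avoids this by Kato's device (estimating $\|\d_t u\|_{L^2}$ and recovering $\Delta u$ from the equation), which is what makes the damping contributions in Proposition~\ref{prop:dtu} nonpositive. The paper then does \emph{not} obtain a uniform $\Sigma^2$ bound: the Br\'ezis--Gallou\"et inequality gives only $\dot{\mathcal E}_2\lesssim \(1+\ln(2+\mathcal E_2)\)^{\si_1-1/2}(2+\mathcal E_2)$, hence after integration $\|u(t)\|_{H^2}\le K_2\exp\(t^{2/(3-2\si_1)}K_1\)$ (double exponential when $\si_1=3/2$). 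This is also where the hypotheses $1/2\le\si_1\le 3/2$ really enter (nonnegativity of the exponent $2\si_1-1$, and integrability of the resulting log-power ODE), not for the reasons you cite.

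Because the $H^2$ norm is only controlled by a quantity growing in time, the dissipation coefficient produced by \eqref{eq:nash2} decays, and one ends up with $\frac{d}{dt}\|u(t)\|_{L^2}^{\alpha/2}\le -K_2\, e^{-t^{2/(3-2\si_1)}K_1}$. Extinction then holds only because the total ``dissipation budget'' $\int_0^\infty e^{-\tau^{2/(3-2\si_1)}K_1}\,d\tau$ is finite, and the smallness $\|u_0\|_{L^2}\le\eta_R$ is precisely the requirement that $\|u_0\|_{L^2}^{\alpha/2}$ lie below this budget. Your constant-coefficient ODE $Y'\le -c_2Y^{1-\alpha/4}$ (which would give extinction at a time depending only on $Y_0$) therefore rests entirely on the unproved uniform $\Sigma^2$ bound, and the role you assign to the $L^2$-smallness is not the one it actually plays. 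To repair the argument you would need either to prove that uniform bound (which the paper's analysis does not achieve) or to switch to the growing-bound/finite-budget scheme above.
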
 
Note that the above smallness assumption is automatically fulfilled as
soon as $\|u_0\|_{\Sigma^2}$ is sufficiently small. 

The proof of the second part of this theorem relies on
Br\'ezis-Gallou\"et inequality introduced in \cite{BrGa80} (and
recently revisited in \cite{OzVi-p}), which require higher energy
estimates.

\section{Existence result and a priori estimates}
\label{sec:exist}

\subsection{Preliminary technical results}
\label{sec:prelim}

We recall the standard Gagliardo-Nirenberg inequalities (see e.g. \cite{CazCourant}):
\begin{lemma}\label{lem:GN}
  Let $M$ be as in Assumption~\ref{hyp:main}.  If $d=1$, let $p\in [2,\infty]$, and if
  $d=2$, let $p\in [2,\infty)$. There exists
  $C=C(p,d)$ such that for all $f\in H^1(M)$,
  \begin{equation*}
    \|f\|_{L^p(M)} \le C
    \|f\|_{L^2(M)}^{1-\delta(p)}\|f\|_{H^1(M)}^{\delta(p)},\quad \text{where}\quad
    \delta(p)= d\(\frac{1}{2}-\frac{1}{p}\).
  \end{equation*}
If $M=\R^d$, then the inhomogeneous Sobolev norm
$\|\cdot\|_{H^1(\R^d)} $ can be replaced by the homogeneous norm
$\|\cdot\|_{\dot H^1(\R^d)} $.
\end{lemma}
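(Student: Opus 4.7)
The Gagliardo-Nirenberg inequalities are classical (see \cite{CazCourant}), but the plan I would follow is this. By density of $C_c^\infty(\R^d)$ in $H^1(\R^d)$, and of $C^\infty(M)$ in $H^1(M)$ when $M$ is compact, it suffices to treat smooth test functions.

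For $M=\R^d$ with $d=1$, I would start from the pointwise identity $|f(x)|^2=2\int_{-\infty}^x \RE(\bar f f')(y)\,dy$, so that Cauchy--Schwarz yields $\|f\|_{L^\infty}^2\le 2\|f\|_{L^2}\|f'\|_{L^2}$. The elementary interpolation $\|f\|_{L^p}^p\le \|f\|_{L^\infty}^{p-2}\|f\|_{L^2}^2$ for $p\in[2,\infty)$ then closes the claim, with only $\|f'\|_{L^2}$ on the right-hand side, which is the homogeneous refinement. For $M=\R^2$, I would apply the same one-dimensional identity separately in $x_1$ and in $x_2$, multiply the resulting pointwise bounds, and integrate in both variables; two applications of Cauchy--Schwarz give $\|f\|_{L^4}^4\le 4\|f\|_{L^2}^2\|\nabla f\|_{L^2}^2$. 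General $p\in[2,\infty)$ then follows by log-convexity of $L^p$ norms combined with iteration of the same bound applied to suitable powers of $|f|$, still leaving only $\|\nabla f\|_{L^2}$ on the right.

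For compact $M$, I would fix a finite partition of unity $\{\chi_j\}$ subordinate to a cover by coordinate charts, transfer each $\chi_j f$ via the local charts to a compactly supported function on $\R^d$, apply the Euclidean estimates just sketched, and sum. The contributions produced by derivatives of the $\chi_j$ are zeroth order in $f$ and therefore absorbed into $\|f\|_{L^2}$; this is precisely what forces the full inhomogeneous norm $\|f\|_{H^1(M)}$ to appear on a compact manifold, while on $\R^d$ one may retain the bare gradient norm.

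The only mildly delicate step is the tensorized identity used for the $L^4$ bound in two dimensions; everything else amounts to density, interpolation, and localization, and none of the ingredients are sensitive to the nonlinear problem at hand.
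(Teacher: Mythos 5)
The paper does not actually prove this lemma: it is quoted as standard, with a citation to \cite{CazCourant}. Your sketch is the classical self-contained argument (Agmon's bound in 1D, Ladyzhenskaya's tensorization in 2D, localization by a partition of unity on a compact $M$), and it is essentially correct: in 1D the bound $\|f\|_{L^\infty}^2\le 2\|f\|_{L^2}\|f'\|_{L^2}$ together with $\|f\|_{L^p}^p\le\|f\|_{L^\infty}^{p-2}\|f\|_{L^2}^2$ reproduces exactly the exponent $\delta(p)=\tfrac12-\tfrac1p$, and in 2D the tensorized identity gives $\|f\|_{L^4}^4\le 4\|f\|_{L^2}^2\|\nabla f\|_{L^2}^2$, i.e.\ $\delta(4)=\tfrac12$. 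Your remark that the cutoff terms on a compact manifold force the inhomogeneous norm is also the right heuristic (constants already show the homogeneous version must fail there).

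The one step I would tighten is the passage from $p=4$ to general $p\in[2,\infty)$ in 2D. If by ``applying the same bound to powers of $|f|$'' you mean applying the $L^4$ inequality to $g=|f|^{s}$, the step does not close: it requires estimating $\|\nabla g\|_{L^2}\le s\,\||f|^{s-1}\nabla f\|_{L^2}$, and H\"older cannot reduce this to $\|\nabla f\|_{L^2}$ times a Lebesgue norm of $f$ without putting $|f|^{s-1}$ in $L^\infty$, which is unavailable in 2D. The fix is to rerun the tensorized line-integral argument directly on $|f|^{p/2}$, placing $|f|^{p/2-1}$ and $\partial_j f$ in $L^2$ on each line; this yields $\|f\|_{L^p}^p\le \tfrac{p^2}{4}\|f\|_{L^{p-2}}^{p-2}\|\partial_1 f\|_{L^2}\|\partial_2 f\|_{L^2}$, and induction over even integers followed by H\"older interpolation (note $\delta(p)=1-2/p$ is affine in $1/p$, so interpolation preserves the exponents) gives the full range with only $\|\nabla f\|_{L^2}$ on the right. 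A last routine point worth a word is the chain rule for $|f|^{p/2}$ when $f$ is merely $H^1$, handled by $\bigl|\nabla|f|\bigr|\le|\nabla f|$ a.e.\ or by smoothing $(|f|^2+\eps)^{p/4}$; with these adjustments your argument is complete and consistent with the cited reference.
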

We recall the standard compactness result (see e.g. \cite{KavianWeissler}):
\begin{lemma}\label{lem:compact}
  Let $M=\R^d$, $d=1$ or $2$. If $d=1$, let $p\in [2,\infty]$, and if
  $d=2$, let $p\in [2,\infty)$. The embedding $\Sigma \hookrightarrow
  L^p(\R^d)$ is compact. 
\end{lemma}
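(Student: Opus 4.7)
The plan is to combine three ingredients: the local Rellich--Kondrachov theorem on balls, a tail estimate furnished by the weight $|x|$ built into the definition of $\Sigma$, and the Gagliardo--Nirenberg inequality of Lemma~\ref{lem:GN} to upgrade $L^2$-convergence to $L^p$-convergence. Let $(f_n)_{n\ge 1}$ be a sequence bounded in $\Sigma$; normalizing, assume $\|f_n\|_\Sigma\le 1$. Since $\Sigma\hookrightarrow H^1(\R^d)$ continuously and $H^1(B_R)\hookrightarrow L^2(B_R)$ compactly on every ball $B_R$, a diagonal extraction over $R\to\infty$ produces a subsequence (not relabeled) and a limit $f\in L^2_{\mathrm{loc}}(\R^d)$ with $f_n\to f$ in $L^2(B_R)$ for every $R>0$. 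Since $\Sigma$ is a Hilbert space, I may arrange in addition that $f_n\rightharpoonup f$ weakly in $\Sigma$, so that $f\in\Sigma$.

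Next I convert local convergence to global $L^2$-convergence using the weight. The bound $\||x|f_n\|_{L^2}\le 1$ yields, for every $R>0$,
\[
\int_{|x|>R}|f_n(x)|^2\,dx \le \frac{1}{R^2}\int_{|x|>R}|x|^2|f_n(x)|^2\,dx \le \frac{1}{R^2},
\]
and the same bound holds with $f_n$ replaced by $f$ by weak lower semicontinuity. Given $\eps>0$, I fix $R$ so that $4/R^2<\eps^2/2$ and then $n_0$ so that $\|f_n-f\|_{L^2(B_R)}^2<\eps^2/2$ for $n\ge n_0$; summing the two contributions gives $\|f_n-f\|_{L^2(\R^d)}\to 0$.

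Finally, Lemma~\ref{lem:GN} yields
\[
\|f_n-f\|_{L^p(\R^d)} \le C\|f_n-f\|_{L^2}^{1-\delta(p)}\|f_n-f\|_{H^1}^{\delta(p)},
\]
and since $(f_n-f)$ is bounded in $H^1$ while convergent to $0$ in $L^2$, we obtain $f_n\to f$ in $L^p(\R^d)$, provided $\delta(p)<1$. This condition holds exactly in the ranges stated: for $d=2$ it forces $p<\infty$, while for $d=1$ even $p=\infty$ is allowed, since $\delta(\infty)=1/2$.

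There is no genuine obstacle here; the only point requiring a moment of care is the tail estimate, which is the one place where the weighted part $\||x|f\|_{L^2}$ of the $\Sigma$-norm (as opposed to the mere $H^1$-norm) is used. Without this weight the embedding into $L^p(\R^d)$ would fail to be compact, as translation counterexamples on $\R^d$ show.
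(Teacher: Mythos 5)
Your proof is correct: the combination of local Rellich--Kondrachov compactness, the tail bound $\int_{|x|>R}|f_n|^2\,dx\le R^{-2}\||x|f_n\|_{L^2}^2$ coming from the weighted part of the $\Sigma$-norm, and Gagliardo--Nirenberg interpolation with $\delta(p)<1$ (which is exactly what restricts to $p<\infty$ when $d=2$ and allows $p=\infty$ when $d=1$) is precisely the standard argument. The paper does not prove this lemma but simply cites it as a known result, so there is nothing further to compare; your write-up supplies the expected proof.
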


If $M$ is a compact manifold without boundary, H\"older inequality
readily yields, for $1\le p<q\le \infty$,
\begin{equation*}
  \|f\|_{L^p(M)}\le |M|^{1/p-1/q} \|f\|_{L^q(M)},\quad \forall f\in L^q(M). 
\end{equation*}
On the whole space $\R^d$, an analogous inequality is
provided by the control of momenta, which can be
viewed as dual to the 
Gagliardo-Nirenberg inequalities (see e.g. \cite{CaMi04}): 
\begin{lemma}\label{lem:GNdual}
  Let $M=\R^d$, $d=1$ or $2$. If $d=1$, let $p\in [2,\infty]$, and if
  $d=2$, let $p\in [2,\infty)$. There exists $C=C(p,d)$ such that for
  all $f\in \F(H^1(\R^d))$, 
  \begin{equation*}
    \|f\|_{L^{p'}(\R^d)}\le C
    \|f\|_{L^2(\R^d)}^{1-\delta(p)}\|x f\|_{L^2(\R^d)}^{\delta(p)},\quad
    \text{where}\quad 
    \delta(p)= d\(\frac{1}{2}-\frac{1}{p}\).
  \end{equation*}
\end{lemma}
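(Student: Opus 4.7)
My approach is to avoid the Fourier side (Hausdorff--Young goes in the wrong direction when $p'\le 2$) and instead prove the bound by a direct spatial splitting followed by H\"older's inequality, optimizing the splitting parameter at the end.

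Concretely, define the auxiliary exponent $s$ by $\frac{1}{p'}=\frac{1}{2}+\frac{1}{s}$, i.e.\ $s=d/\delta(p)$. For each $R>0$, decompose
\begin{equation*}
f = f\chi_{\{|x|\le R\}} + f\chi_{\{|x|>R\}} =: f_1 + f_2 .
\end{equation*}
To $f_1$ I apply H\"older with exponents $s$ and $2$: since $\|\chi_{\{|x|\le R\}}\|_{L^s}$ is proportional to $R^{d/s}=R^{\delta(p)}$, this gives $\|f_1\|_{L^{p'}}\le CR^{\delta(p)}\|f\|_{L^2}$. For $f_2$ I write $|f_2|=|x|^{-1}\cdot |xf|\cdot \chi_{\{|x|>R\}}$ and again apply H\"older with the same pair $(s,2)$, putting $|x|^{-1}\chi_{\{|x|>R\}}$ in the $L^s$ factor and $xf$ in the $L^2$ factor. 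A one-line radial integration yields
\begin{equation*}
\||x|^{-1}\chi_{\{|x|>R\}}\|_{L^s} \le C\, R^{\delta(p)-1},
\end{equation*}
hence $\|f_2\|_{L^{p'}}\le CR^{\delta(p)-1}\|xf\|_{L^2}$. Summing the two bounds and minimizing in $R$ (the optimum being $R=\|xf\|_{L^2}/\|f\|_{L^2}$) produces the claimed inequality with the correct homogeneity.

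The key delicate point, and I expect it to be the only real obstacle, is the integrability condition $\int_{|x|>R}|x|^{-s}\,dx<\infty$, which requires $s>d$, i.e.\ $\delta(p)<1$. In $d=1$, even $p=\infty$ gives $\delta=1/2<1$, so the full range $p\in[2,\infty]$ is allowed. In $d=2$, the condition $\delta<1$ rules out exactly $p=\infty$, which is precisely the restriction built into the statement; at $p=\infty$ the radial integral diverges logarithmically at infinity, reflecting a genuine scaling obstruction rather than a defect of the method. Finally, the hypothesis $f\in\F(H^1(\R^d))$ ensures via Plancherel that both $f$ and $xf$ lie in $L^2$, so the right-hand side is finite; a standard approximation of $f$ by Schwartz functions legitimises the splitting and H\"older step.
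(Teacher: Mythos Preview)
Your proof is correct. The paper does not actually supply a proof of this lemma; it merely cites \cite{CaMi04} and remarks that the inequality ``can be viewed as dual to the Gagliardo--Nirenberg inequalities,'' meaning that it is formally obtained from Lemma~\ref{lem:GN} by exchanging the roles of $x$ and $\nabla$ (i.e.\ passing to the Fourier side). As you correctly observe, this duality is only heuristic: applying Lemma~\ref{lem:GN} to $\hat f$ yields $\|\hat f\|_{L^p}\le C\|f\|_{L^2}^{1-\delta}\|xf\|_{L^2}^{\delta}$, but Hausdorff--Young points the wrong way and does not let one replace $\|\hat f\|_{L^p}$ by $\|f\|_{L^{p'}}$. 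Your direct spatial splitting with optimization in $R$ is a clean, self-contained argument and pinpoints exactly why $p=\infty$ is excluded in $d=2$ (the logarithmic divergence of $\int_{|x|>R}|x|^{-2}\,dx$), matching the range in the statement.

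Two negligible remarks: the endpoint $p=2$ is trivial (both sides equal $\|f\|_{L^2}$, $\delta=0$), so the optimization step need not be invoked there; and the approximation by Schwartz functions is not really needed, since the splitting and H\"older's inequality are valid for any measurable $f$ with $f,xf\in L^2$.
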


\subsection{Approximate solution}
\label{sec:approximate}
 Following the same strategy as in \cite{CaGa11}, we modify
 \eqref{eq:main} by regularizing the sublinear nonlinearity:
\begin{equation}
  \label{eq:approx}
i\d_t u^\delta +\frac{1}{2}\Delta u^\delta = V(x)u^\delta + \l |u^\delta|^{2\si_1}u^\delta -ia
|u^\delta|^{2\si_2}u^\delta-ib\frac{u^\delta}{\(|u^\delta|^2+\delta\)^{\alpha/2}}.
\end{equation}
We keep the same initial datum \eqref{eq:ci}. Since the external
potential $V$ is at most quadratic, local in time Strichartz
inequalities are available for the Hamiltonian $-\frac{1}{2}\Delta
+V$. With $d\le 2$, all the nonlinearities are energy-subcritical,
and we infer (see e.g. \cite{CazCourant}):
\begin{lemma}\label{lem:cauchyloc}
  Let  $\delta>0$, and  $u_0\in
  \Sigma$. There exist $T>0$ and a unique solution 
  \begin{equation*}
    u^\delta  \in C([0,T];\Sigma)\cap
    L^{\frac{4\si_1+4}{d\si_1}}([0,T];L^{2\si_1+2}(M)) \cap
    L^{\frac{4\si_2+4}{d\si_2}}([0,T];L^{2\si_2+2}(M)) 
  \end{equation*}
to the Cauchy problem \eqref{eq:approx}-\eqref{eq:ci}. In addition,
for all $t\in [0,T]$, it satisfies
\begin{equation}
  \label{eq:L2}
  \|u^\delta(t)\|_{L^2(M)}^2 + 2b\int_0^t\int_M
  \frac{|u^\delta(\tau,x)|^2}{\(|u^\delta(\tau,x)|^2+\delta\)^{\alpha/2}}dxd\tau
  \le  \|u_0\|_{L^2(M)}^2. 
\end{equation}
\end{lemma}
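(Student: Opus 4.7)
The plan is to obtain $u^\delta$ via a Banach fixed-point argument applied to the Duhamel formulation
\begin{equation*}
  u^\delta(t) = U(t)u_0 - i\int_0^t U(t-\tau)\bigl(\lambda |u^\delta|^{2\sigma_1}u^\delta - ia|u^\delta|^{2\sigma_2}u^\delta - ib\,G_\delta(u^\delta)\bigr)(\tau)\,d\tau,
\end{equation*}
where $U(t)$ denotes the linear propagator associated with $-\tfrac12\Delta+V$, and $G_\delta(z):=z/(|z|^2+\delta)^{\alpha/2}$. On $M=\R^d$ with $V$ at most quadratic, Fujiwara's parametrix construction provides local-in-time dispersive bounds and hence the full family of Strichartz estimates along admissible pairs $(q,p)$ with $2/q+d/p=d/2$; on a compact $d$-manifold, analogous (possibly lossy) Strichartz estimates are classical, and harmless here since $d\le 2$ makes each power nonlinearity energy-subcritical.

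The natural working space is
\begin{equation*}
  X_T = C([0,T];\Sigma) \cap L^{q_1}([0,T];W^{1,2\sigma_1+2}(M)) \cap L^{q_2}([0,T];W^{1,2\sigma_2+2}(M)),
\end{equation*}
where $q_i=(4\sigma_i+4)/(d\sigma_i)$ is the exponent conjugate (in the Strichartz sense) to $L^{2\sigma_i+2}$. The two power nonlinearities are treated in the standard Kato-type manner: H\"older in time combined with Sobolev embedding in space produces small contributions after shrinking $T$, uniformly on bounded balls of $\Sigma$. The regularized sublinear term is actually easier, because for fixed $\delta>0$ the map $G_\delta$ is smooth on $\C$ with bounded derivatives, hence globally Lipschitz with a $\delta$-dependent constant, and it sends bounded sets of $H^1$ to bounded sets of $H^1$. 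A contraction argument on a small ball of $X_T$ then yields a unique local solution; the $|x|$-weight in $\Sigma$ is propagated on $\R^d$ by applying $x$ and $\nabla$ to the Duhamel identity and exploiting their commutation with $-\tfrac12\Delta+V$ (Mehler-type rotation of the Heisenberg observables under the linear flow), which is routine but bookkeeping-heavy.

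Once $u^\delta \in C([0,T];\Sigma)$ is a strong solution of \eqref{eq:approx}, the energy identity is obtained by multiplying the equation by $\bar u^\delta$, integrating over $M$, and taking the imaginary part: the potential term and the Hamiltonian nonlinearity $\lambda|u^\delta|^{2\sigma_1}u^\delta$ contribute only real quantities and hence drop out, leaving
\begin{equation*}
  \frac{d}{dt}\|u^\delta(t)\|_{L^2}^2 + 2a\|u^\delta(t)\|_{L^{2\sigma_2+2}}^{2\sigma_2+2} + 2b\int_M \frac{|u^\delta(t,x)|^2}{(|u^\delta(t,x)|^2+\delta)^{\alpha/2}}\,dx = 0.
\end{equation*}
Dropping the non-negative $a$-term and integrating over $[0,t]$ yields \eqref{eq:L2}. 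The main obstacle is not the regularized sublinear nonlinearity, which is tamed by $\delta>0$, but the propagation of the momentum weight $|x|u^\delta$ on $\R^d$, where one must run the Strichartz estimates in parallel for $u^\delta$, $\nabla u^\delta$ and $x u^\delta$ and control the commutator $[x,-\tfrac12\Delta+V]$ against each nonlinear contribution.
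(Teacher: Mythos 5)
Your argument is correct and matches the paper's own treatment: the paper proves this lemma by exactly the route you describe, namely invoking local-in-time Strichartz estimates for $-\tfrac12\Delta+V$ (Fujiwara's construction for at most quadratic $V$, or the compact-manifold analogue), noting that for $d\le 2$ all nonlinearities are energy-subcritical and the regularized term $f_\delta$ is Lipschitz for fixed $\delta>0$, and then deriving \eqref{eq:L2} from the mass identity by dropping the non-negative $a$-term. No substantive difference to report.
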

To prove that the solution to \eqref{eq:approx} is actually global in
the future (the equation is irreversible), denote by
\begin{equation}\label{eq:E0}
  E_0^\delta(t) = \|\nabla u^\delta(t)\|_{L^2}^2+ 2\int_M
  V(x)|u^\delta(t,x)|^2dx+\frac{2\lambda}{\si_1+1} 
  \|u^\delta(t)\|_{L^{2\si_1+2}}^{2\si_1+2},
\end{equation}
and, following the approach introduced in \cite{AnSp10}, for $k>0$, set
\begin{equation}\label{eq:Ek}
  E_k^\delta(t):=E_0^\delta(t) + k \|u^\delta(t)\|_{L^{2\si_2+2}}^{2\si_2+2}.
\end{equation}
The energy $E_0^\delta$ involves the Hamiltonian part of
\eqref{eq:approx}, and $E_k^\delta$ consists of the artificial
introduction of the extra nonlinearity $|u|^{2\si_2}u$, as if it were
Hamiltonian instead of a damping term. 
\begin{proposition}\label{prop:apriori}
 (1)  Assume that $\si_1<2/d$ or $\l \ge 0$. 
 There exists a $C=C(\|u_0\|_{L^2})\ge 0$ independent of
$\delta\in (0,1]$ such that
\begin{equation*}
  E_0^\delta(t)\le E_0^\delta(0)+C(\|u_0\|_{L^2})\quad \forall t\in [0,T],
\end{equation*}
where $T>0$ is a local existence time in $\Sigma$. \\
(2) If $\l<0$, assume that $a>0$, $0<k<\frac{2a}{\si_2(\si_2+1)}$, and $\si_2>\si_1$.
There exists a $C=C(\|u_0\|_{L^2})\ge 0$ independent of
$\delta\in (0,1]$  such that
\begin{equation*}
  E_k^\delta(t)\le E_k^\delta(0)+C(\|u_0\|_{L^2})\quad \forall t\in [0,T],
\end{equation*}
where $T>0$ is a local existence time in $\Sigma$. 
 \end{proposition}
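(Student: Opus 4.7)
My plan is to derive a differential identity for $E_0^\delta$ (resp.\ $E_k^\delta$), identify manifestly non-positive contributions, and absorb the rest using the $L^2$ dissipation from \eqref{eq:L2}. The key observation is that $E_0^\delta$ equals twice the Hamiltonian associated with the conservative part $-\tfrac12\Delta u+Vu+\lambda|u|^{2\sigma_1}u$ of \eqref{eq:approx}, so only the damping terms $-ia|u|^{2\sigma_2}u$ and $-ibu/(|u|^2+\delta)^{\alpha/2}$ contribute to $\frac{d}{dt}E_0^\delta$.

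Setting $r:=a|u^\delta|^{2\sigma_2}+b(|u^\delta|^2+\delta)^{-\alpha/2}\ge 0$ and $r':=2\sigma_2 a|u^\delta|^{2\sigma_2-2}-\alpha b(|u^\delta|^2+\delta)^{-\alpha/2-1}$ (so that $\nabla r=r'\RE(\overline{u^\delta}\nabla u^\delta)$), a single integration by parts yields
\begin{equation*}
\frac{d}{dt}E_0^\delta = -2\int r|\nabla u^\delta|^2 - 2\int r'|\RE(\overline{u^\delta}\nabla u^\delta)|^2 - 4\int rV|u^\delta|^2 - 4\lambda\int r|u^\delta|^{2\sigma_1+2}.
\end{equation*}
Using $|\RE(\overline u\nabla u)|^2\le|u|^2|\nabla u|^2$ and the hypothesis $\alpha\le 1$, the first two terms collapse into the non-positive quantity $-2a\int|u^\delta|^{2\sigma_2}|\nabla u^\delta|^2-2(1-\alpha)b\int(|u^\delta|^2+\delta)^{-\alpha/2}|\nabla u^\delta|^2-4\sigma_2 a\int|u^\delta|^{2\sigma_2-2}|\RE(\overline{u^\delta}\nabla u^\delta)|^2$. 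The potential term is controlled via the refined mass identity $\frac{d}{dt}\|u^\delta\|_{L^2}^2=-2\int r|u^\delta|^2$ (obtained by multiplying \eqref{eq:approx} by $\overline{u^\delta}$ and taking imaginary parts), which gives $\int_0^t\int r|u^\delta|^2\,d\tau\le\tfrac12\|u_0\|_{L^2}^2$; combined with boundedness of $V$ in the compact case, or $V\ge 0$ in the harmonic case, the time-integrated contribution $4\int_0^t\int rV|u^\delta|^2\,d\tau$ is then bounded by a constant $C(\|u_0\|_{L^2})$.

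For part~(1) with $\lambda\ge 0$, the last term $-4\lambda\int r|u^\delta|^{2\sigma_1+2}$ is non-positive and the claim follows at once. For $\lambda<0$ and $\sigma_1<2/d$, this term is positive and must be absorbed: Gagliardo--Nirenberg (Lemma~\ref{lem:GN}) combined with Young's inequality and $d\sigma_1<2$ gives $\|u^\delta\|_{L^{2\sigma_1+2}}^{2\sigma_1+2}\le\eta\|\nabla u^\delta\|_{L^2}^2+C_\eta(\|u_0\|_{L^2})$, handling the $b$-piece of $r$; the $a$-piece $4|\lambda|a\int|u^\delta|^{2(\sigma_1+\sigma_2+1)}$ is tamed by exploiting the kinetic dissipation $a\int|u^\delta|^{2\sigma_2}|\nabla u^\delta|^2$ via the chain rule $\nabla|u|^{\sigma_2+1}=(\sigma_2+1)|u|^{\sigma_2}\nabla|u|$ (so this dissipation controls $\|\nabla|u^\delta|^{\sigma_2+1}\|_{L^2}^2$) and Sobolev embedding in $d\le 2$. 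For part~(2), computing $\frac{d}{dt}E_k^\delta$ adds the contribution $k\frac{d}{dt}\|u^\delta\|_{L^{2\sigma_2+2}}^{2\sigma_2+2}$, which brings in the defocusing dissipation $-(2\sigma_2+2)k\int r|u^\delta|^{2\sigma_2+2}$ and a cross term $2(\sigma_2+1)\sigma_2 k\int|u^\delta|^{2\sigma_2-2}\RE(\overline{u^\delta}\nabla u^\delta)\cdot\IM(\overline{u^\delta}\nabla u^\delta)$ arising from $\IM(\overline u\Delta u)$ after integration by parts. Cauchy--Schwarz bounds the cross term by $(\sigma_2+1)\sigma_2 k\int|u^\delta|^{2\sigma_2}|\nabla u^\delta|^2$, absorbed by $2a\int|u^\delta|^{2\sigma_2}|\nabla u^\delta|^2$ precisely when $k<2a/(\sigma_2(\sigma_2+1))$, and then $-4\lambda\int r|u^\delta|^{2\sigma_1+2}$ is dominated by the new dissipation $-(2\sigma_2+2)k\int r|u^\delta|^{2\sigma_2+2}$ using $\sigma_2>\sigma_1$ and Young interpolation with the mass bound.

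The main obstacle is the integration-by-parts bookkeeping leading to the identity above (the cancellation exploiting $\alpha\le 1$ between the $b$-part of $r$ and the $-\alpha b$-part of $r'$ is delicate), and in part~(1) with $\lambda<0$ and $a>0$ the absorption of the super-linear-looking piece $a\int|u^\delta|^{2(\sigma_1+\sigma_2+1)}$, which is not directly tamed by subcritical Gagliardo--Nirenberg applied to $u^\delta$ itself and forces the detour through $|u^\delta|^{\sigma_2+1}$ together with the kinetic dissipation.
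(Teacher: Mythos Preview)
Your overall strategy coincides with the paper's: derive the energy identity for $E_0^\delta$ (your compact formulation via $r,r'$ is equivalent to the paper's longer expansion), observe non-positivity of the right-hand side when $\lambda\ge 0$, and for part~(2) add $k\|u^\delta\|_{L^{2\sigma_2+2}}^{2\sigma_2+2}$ and exploit the extra dissipation it brings. Your Cauchy--Schwarz bound $|\RE(\bar u\nabla u)\cdot\IM(\bar u\nabla u)|\le\tfrac12|u|^2|\nabla u|^2$ on the cross term yields exactly the paper's estimate (the paper gets it via polar factorisation and the identity $2\RE\cdot\IM=|\nabla u|^2-|\RE-\IM|^2$), and the threshold $k<2a/(\sigma_2(\sigma_2+1))$ appears identically. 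Your explicit treatment of the $V$-term via the mass dissipation is actually more careful than the paper, which tacitly takes $V\ge 0$.

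There is, however, a gap in part~(1) when $\lambda<0$ and $\sigma_1<2/d$. You claim Gagliardo--Nirenberg gives $\|u^\delta\|_{L^{2\sigma_1+2}}^{2\sigma_1+2}\le\eta\|\nabla u^\delta\|_{L^2}^2+C_\eta$ and that this ``handles the $b$-piece''. But the $b$-piece is $4|\lambda|b\int(|u^\delta|^2+\delta)^{-\alpha/2}|u^\delta|^{2\sigma_1+2}$, which carries the singular weight; more seriously, even granting a bound of the form $\eta\|\nabla u^\delta\|_{L^2}^2+C_\eta$, there is \emph{no} unweighted $-\|\nabla u^\delta\|_{L^2}^2$ term in $\frac{d}{dt}E_0^\delta$ to absorb the $\eta$-piece: the available kinetic dissipation is weighted by $|u^\delta|^{2\sigma_2}$ or by $(|u^\delta|^2+\delta)^{-\alpha/2}$, and after your collapse the latter comes with the factor $(1-\alpha)$, hence vanishes when $\alpha=1$. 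Your $a$-piece detour via $w=|u^\delta|^{\sigma_2+1}$ has a parallel circularity: after absorbing $\|\nabla w\|_{L^2}^2$ the remainder involves $\|w\|_{L^2}=\|u^\delta\|_{L^{2\sigma_2+2}}^{\sigma_2+1}$, which is not controlled by $\|u_0\|_{L^2}$ alone. The paper is admittedly terse here (``conclude as in the standard case''), but what actually closes the argument is different: write each bad term as $|u^\delta|^{2\sigma_1}\cdot(r|u^\delta|^2)$, bound the first factor by a power $<2$ of $\|u^\delta\|_{H^1}$ (this is precisely where $d\sigma_1<2$ enters), use the refined mass identity to time-integrate the second factor, and close by a bootstrap on $\sup_{[0,t]}E_0^\delta$.
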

\begin{proof}
Denote by 
\begin{equation*}
  f_\delta(v) = \frac{v}{\(|v|^2+\delta\)^{\alpha/2}}.
\end{equation*}
Since
\begin{equation*}
  \Delta u^\delta = -2i\d_t u^\delta +2Vu^\delta +2\l
  |u^\delta|^{2\si_1} u^\delta -2ia |u^\delta|^{2\si_2}u^\delta 
  -2ibf_\delta(u^\delta), 
\end{equation*}
we compute, with $(f\mid g)=\int_M f\bar g$, 
\begin{align*}
  \frac{d}{dt}\|\nabla u^\delta(t)\|_{L^2}^2& = -2\RE\(\d_t
  u^\delta\mid \Delta u^\delta\)\\
&= -2\RE\(\d_t u^\delta\mid -2i\d_t u^\delta +2Vu^\delta +2\l
|u^\delta|^{2\si_1} u^\delta -2ia |u^\delta|^{2\si_2}u^\delta 
  -2ibf_\delta(u^\delta)\)\\
&= -\frac{d}{dt}\(2\int_M V|u^\delta|^2+\frac{2\lambda}{\si_1+1}
\|u^\delta\|_{L^{2\si_1+2}}^{2\si_1+2}\) +4 a\IM\(\d_t 
u^\delta\mid |u^\delta|^{2\si_2}u^\delta\) \\
&\quad+4b\IM\(\d_t u^\delta\mid f_\delta(u^\delta)\). 
\end{align*}
Since 
\begin{equation*}
  \d_t u^\delta = \frac{i}{2}\Delta u^\delta -iV u^\delta -i \l
  |u^\delta|^{2\si_1} u^\delta -a  |u^\delta|^{2\si_2}u^\delta -b
  f_\delta(u^\delta), 
\end{equation*}
we have
\begin{align*}
  \IM\(\d_t u^\delta\mid |u^\delta|^{2\si_2}u^\delta\) &=
  \frac{1}{2}\RE \(\Delta u^\delta \mid 
  |u^\delta|^{2\si_2}u^\delta\) -\int_M V|u^\delta|^{2\si_2+2} -\l
  \int_M|u^\delta|^{2\si_1+2\si_2+2}\\ 
& = -\frac{1}{2}\int_M |u^\delta|^{2\si_2}|\nabla u^\delta |^2 -\si_2 \int_M
|u^\delta|^{2\si_2}\left\lvert \nabla \lvert u^\delta\rvert\right\rvert^2 -\int_M
V|u^\delta|^{2\si_2+2}\\
&\quad  -\l \int_M|u^\delta|^{2\si_1+2\si_2+2}, 
\end{align*}
where for the last equality, we have used the identity
\begin{equation*}
  \Delta |u^\delta|^2 = 2\RE\(\bar u^\delta\Delta u^\delta\)+2|\nabla u^\delta|^2. 
\end{equation*}
On the other hand, we have
\begin{equation*}
  \IM\(\d_t u^\delta\mid f_\delta(u^\delta)\)=\frac{1}{2}\RE \(\Delta u^\delta
  \mid f_\delta(u^\delta)\) - \int_M
  V\frac{|u^\delta|^{2}}{\(|u^\delta|^2+\delta\)^{\alpha/2}} -\l
  \int_M\frac{|u^\delta|^{2\si_1+2}}{\(|u^\delta|^2+\delta\)^{\alpha/2}}. 
\end{equation*}
We have
\begin{align*}
  \RE \(\Delta u^\delta
  \mid f_\delta(u^\delta)\) & = -\RE \(\nabla u^\delta\mid\nabla
  f_\delta(u^\delta)\)\\
&= -\int_M\frac{|\nabla
  u^\delta|^2}{\(|u^\delta|^2+\delta\)^{\alpha/2}}+\alpha \RE \int_M
\bar u^\delta \nabla u^\delta\cdot \frac{\RE(\bar u^\delta\nabla
 u^\delta)}{\(|u^\delta|^2+\delta\)^{\alpha/2+1}} \\
&= 
-\int_M\(|u^\delta|^2+\delta\)\frac{|\nabla
  u|^2}{\(|u^\delta|^2+\delta\)^{\alpha/2+1}}+\alpha\int_M
 \frac{\left|\RE(\bar u^\delta\nabla
 u^\delta)\right|^2}{\(|u^\delta|^2+\delta\)^{\alpha/2+1}}\\
& =
-\delta\int_M\frac{|\nabla
  u|^2}{\(|u^\delta|^2+\delta\)^{\alpha/2+1}}-\int_M
 \frac{\left|\IM(\bar u^\delta\nabla
 u^\delta)\right|^2}{\(|u^\delta|^2+\delta\)^{\alpha/2+1}}\\
&\quad -(1-\alpha)\int_M
 \frac{\left|\RE(\bar u^\delta\nabla
 u^\delta)\right|^2}{\(|u^\delta|^2+\delta\)^{\alpha/2+1}}.
\end{align*}
From the above computations, we have:
\begin{align*}
  \frac{d}{dt}E_0^\delta &= -2a \int_M |u^\delta|^{2\si_2}|\nabla u^\delta |^2 -4a\si_2 \int_M
|u^\delta|^{2\si_2}\left\lvert \nabla \lvert u^\delta\rvert\right\rvert^2 -4a \int_M
V|u^\delta|^{2\si_2+2}\\
&\quad -4a\l \int_M|u^\delta|^{2\si_1+2\si_2+2}  -4b \int_M
  V\frac{|u^\delta|^{2}}{\(|u^\delta|^2+\delta\)^{\alpha/2}} -4b\l
  \int_M\frac{|u^\delta|^{2\si_1+2}}{\(|u^\delta|^2+\delta\)^{\alpha/2}}\\
&\quad -\delta\int_M\frac{|\nabla
  u|^2}{\(|u^\delta|^2+\delta\)^{\alpha/2+1}}-\int_M
 \frac{\left|\IM(\bar u^\delta\nabla
 u^\delta)\right|^2}{\(|u^\delta|^2+\delta\)^{\alpha/2+1}}\\
&\quad-2b\delta\int_M\frac{|\nabla
  u|^2}{\(|u^\delta|^2+\delta\)^{\alpha/2+1}}-2b\int_M
 \frac{\left|\IM(\bar u^\delta\nabla
 u^\delta)\right|^2}{\(|u^\delta|^2+\delta\)^{\alpha/2+1}}\\
&\quad -2b(1-\alpha)\int_M
 \frac{\left|\RE(\bar u^\delta\nabla
 u^\delta)\right|^2}{\(|u^\delta|^2+\delta\)^{\alpha/2+1}}
 .
\end{align*}
If $\l\ge 0$ (defocusing case), $E_0^\delta$, defined in
\eqref{eq:E0}, is non-increasing. If $\si_1<2/d$, we conclude as in
the standard case presented for instance in \cite{CazCourant}.

To treat the 
focusing case $\l<0$, with $\si_1\ge 2/d$ (finite time blow-up is
possible in the case $a=b=0$), we follow the strategy adopted in \cite{AnSp10} and
generalized in \cite{AnCaSp-p}, relying on $E_k^\delta$, defined in \eqref{eq:Ek}. 
The following computation is valid for any $p>2$:
\begin{align*}
  \frac{d}{dt}\|u^\delta(t)\|_{L^p}^p& = p\RE\(\d_t u^\delta\mid |u^\delta|^{p-2}u^\delta\)\\
& = -\frac{p}{2}\IM \(\Delta u^\delta\mid|u^\delta|^{p-2}u^\delta\) -ap \int_M
|u^\delta|^{2\si_2+p}-bp\int_M \frac{|u^\delta|^{p}}{\(|u^\delta|^2+\delta\)^{\alpha/2}}\\
&= \frac{p}{2}\int_M \nabla|u^\delta|^{p-2}\cdot \IM\(\bar
u^\delta\nabla u^\delta\)-ap \int_M 
|u^\delta|^{2\si_2+p}-bp\int_M \frac{|u^\delta|^{p}}{\(|u^\delta|^2+\delta\)^{\alpha/2}}.
\end{align*}
As in \cite{AnCaSp-p},  we use the polar factorisation introduced in
\cite{Ka72,Ag82} 
(see also \cite{AnMa09,CaDaSa12}), to show that 
\begin{equation*}
\int_M \nabla|u^\delta|^{p-2}\cdot \IM\(\bar u^\delta\nabla u^\delta\)=
(p-2)\int_M|u^\delta|^{p-2}\RE(\bar\phi\nabla u^\delta)\cdot\IM(\bar\phi\nabla
u^\delta), 
\end{equation*}
where $\phi$ is the polar factor related to $u^\delta$, 
\begin{equation*}
  \phi(t,x):=
\left\{
  \begin{aligned}
   & |u^\delta(t,x)|^{-1}u^\delta(t,x) & \quad\text{ if }u^\delta(t,x)\not =0,\\
&0& \quad\text{ if }u^\delta(t,x) =0.
  \end{aligned}
\right.
\end{equation*}
In view of the identity
\begin{equation*}
 2 \RE(\bar\phi\nabla u^\delta)\cdot\IM(\bar\phi\nabla
u^\delta) = -\left| \RE(\bar\phi\nabla u^\delta) - \IM(\bar\phi\nabla
u^\delta)\right|^2 +|\nabla u^\delta|^2,
\end{equation*}
we obtain:
\begin{align*}
   \frac{d}{dt}\|u^\delta(t)\|_{L^p}^p&= -\frac{p(p-2)}{4}\int_M |u^\delta|^{p-2}
   \left| \RE(\bar\phi\nabla u^\delta) - \IM(\bar\phi\nabla u^\delta)\right|^2\\
&\quad+\frac{p(p-2)}{4} \int_M |u^\delta|^{p-2}|\nabla u^\delta|^2 -ap \int_M
|u^\delta|^{2\si_2+p}-bp\int_M \frac{|u^\delta|^{p}}{\(|u^\delta|^2+\delta\)^{\alpha/2}}.
\end{align*}
We finally have:
\begin{align*}
  \frac{d}{dt}E_k^\delta&\le -2a \int_M |u^\delta|^{2\si_2}|\nabla u^\delta |^2 -4a\l
  \int_M|u^\delta|^{2\si_1+2\si_2+2} -4b\l\int_M\frac{
    |u^\delta|^{2\si_1+2}}{\(|u^\delta|^2+\delta\)^{\alpha/2}} \\ 
&\quad +k\si_2(\si_2+1) \int_M |u^\delta|^{2\si_2}|\nabla u^\delta |^2-a k(2\si_2+2)\int_M
|u^\delta|^{4\si_2+2}\\
&\quad  -bk(2\si_2+2)\int_M\frac{ |u^\delta|^{2\si_2+2}}{\(|u^\delta|^2+\delta\)^{\alpha/2}}. 
\end{align*}
If $0<k\si_2(\si_2+1)<2a$, and since $\l<0$, we come up with:
\begin{align*}
  \frac{d}{dt}E_k^\delta&\le 4a|\l|
  \int_M|u^\delta|^{2\si_1+2\si_2+2} +4b|\l|\int_M\frac{
    |u^\delta|^{2\si_1+2}}{\(|u^\delta|^2+\delta\)^{\alpha/2}}  \\
&\quad -a k(2\si_2+2)\int_M
|u^\delta|^{4\si_2+2}  -bk(2\si_2+2)\int_M \frac{
  |u^\delta|^{2\si_2+2}}{\(|u^\delta|^2+\delta\)^{\alpha/2}}.  
\end{align*}
If $\si_2>\si_1$ (the superlinear damping is ``stronger'' than the focusing
term), then the negative terms on the right hand side control the
positive terms (since $\|u^\delta\|_{L^2}$ is non-increasing), hence
the result.
\end{proof}

\subsection{Convergence of the approximation}
\label{sec:convergence}

We now follow the strategy introduced in \cite{GV85c}, and resumed in
\cite{CaGa11}. 

A straightforward
consequence from \eqref{eq:L2} and Proposition~\ref{prop:apriori} is that for 
$u_0\in \Sigma$ fixed, the sequence $(u^\delta)_{0<\delta\le 1}$ is
uniformly bounded in $L^\infty(\R_+,\Sigma)\cap
L^{2-\alpha}(\R_+\times M)$. We deduce the existence of
$u\in L^\infty(\R_+,\Sigma)$ and of a
subsequence $u^{\delta_n}$ such that
\begin{equation}\label{convf*}
u^{\delta_n}\rightharpoonup u,\quad  \text{in }w*\ L^\infty(\R_+,\Sigma),
\end{equation}
with, in view of \eqref{eq:L2} and Proposition~\ref{prop:apriori}, 
\begin{equation*}
  \|u\|_{L^\infty(\R_+,H^1( M))}\le \|u_0\|_{H^1(M)} +C(\|u_0\|_{L^2(M)}).
\end{equation*}
Moreover,
$\frac{u^\delta}{(|u^\delta|^2+\delta)^{\alpha/2}}$ is uniformly bounded
in $L^\infty(\R_+,L^{\frac{2}{1-\alpha}}( M))$ (with $2/(1-\alpha)=\infty$
if $\alpha=1$), such that up to the
extraction of an 
other subsequence, there is $F\in
L^\infty(\R_+,L^{\frac{2}{1-\alpha}}( M))$ 
such that
\begin{equation}\label{convf*F}
\frac{u^{\delta_n}}{(|u^{\delta_n}|^2+\delta_n)^{\alpha/2}}
\rightharpoonup F,\quad \text{in } w*\ L^\infty(\R_+,L^{\frac{2}{1-\alpha}}( M)).
\end{equation}
Moreover, $\|F\|_{L^\infty(\R_+,L^{\frac{2}{1-\alpha}}( M))}\le
\|u_0\|_{L^2(M)}^{1-\alpha}$. In view of Lemma~\ref{lem:compact}
(whose analogue  is obvious in the case where $M$ is compact), 
\begin{equation*}
  |u^{\delta_n}|^{2\si_j}u^{\delta_n} \Tend n \infty
  |u|^{2\si_j}u\quad\text{in }L^1_{\rm loc}(\R_+\times M),\quad j=1,2. 
\end{equation*}
Let $\theta\in \mathcal{C}_c^\infty(\R_+^*\times M)$. Then
\begin{align*}
&\<-ib\frac{u^{\delta_n}}{(|u^{\delta_n}|^2+\delta_n)^{\alpha/2}},
  \theta\>\\
&=\<i\d
  u^{\delta_n}+\frac{1}{2}\Delta
u^{\delta_n}-Vu^{\delta_n} -\l |u^{\delta_n}|^{2\si_1}u^{\delta_n}
+ia  |u^{\delta_n}|^{2\si_2}u^{\delta_n},\theta\>\\
&=\<u^{\delta_n},-i\frac{\partial
  \theta}{\partial t}+\frac{1}{2}\Delta \theta\>+ \<-Vu^{\delta_n} -\l
|u^{\delta_n}|^{2\si_1}u^{\delta_n} 
+ia  |u^{\delta_n}|^{2\si_2}u^{\delta_n},\theta\>\\
&\Tend n \infty \<u,-i\frac{\partial
  \theta}{\partial t}+\frac{1}{2}\Delta \theta\>
+ \<-Vu -\l
|u|^{2\si_1}u
+ia  |u|^{2\si_2}u,\theta\>\\
&=\<i\frac{\partial
  u}{\partial t}+\frac{1}{2}\Delta u-Vu -\l |u|^{2\si_1}u
+ia  |u|^{2\si_2}u,\theta\>,
\end{align*}
where $\<\cdot,\cdot\>$ stands for the distribution bracket on
$\R_+^*\times M$. Thus, we deduce 
$$i\d_t u+\frac{1}{2}\Delta u= V(x)u + \l |u|^{2\si_1}u -ia
|u|^{2\si_2}u-ib F,\quad \text{ in }
\mathcal{D}'(\R_+^*\times M).$$
We next show that $F= u/|u|^\alpha$ where the right hand side is well
defined, that is if $\alpha<1$, or $\alpha=1$ and $u\neq 0$. We first
suppose that $u_0\in 
H^s( M)$ with $s$ large. Let us fix $t'\in \R_+$ and $\delta>0$. Thanks to
\eqref{eq:L2}, we infer, for any $t\in \R_+$,
\begin{equation*}
\frac{d}{dt}\|u^\delta(t)-u^\delta(t')\|_{L^2}^2  \le 
\frac{d}{dt}\big(-2\RE
\left(u^\delta(t)\mid u^\delta(t')\right)\big),
\end{equation*}
where $(\cdot\mid \cdot)$ denotes the scalar product in $L^2(M)$. In view
of \eqref{eq:approx}, the right hand side is equal to
\begin{equation*}
  -2\RE\left(\frac{i}{2}\Delta u^\delta(t)-iVu^\delta -i\l
  |u^\delta|^{2\si_1}u^\delta -a|u^\delta|^{2\si_2}u^\delta-\frac{b
    u^\delta(t)}{(|u^\delta(t)|^2+\delta)^{\alpha/2}}\Big|u^\delta(t')\right).
\end{equation*}
 By
integration, we deduce
\begin{equation}\label{cont}
  \begin{aligned}
   \|u^\delta(t)-u^\delta(t')\|_{L^2( M)}^2& \le 2|t-t'|\Big( \frac{1 }{2}\|\Delta
u^\delta\|_{L^\infty(\R_+;H^{-1})}
\|u^\delta\|_{L^\infty(\R_+;H^{1})}\\
&\quad+\|Vu\|_{L^\infty(\R_+;L^2)}^2 + |\l|
\|u^\delta\|_{L^\infty(\R_+;L^{2\si_1+2})}^{2\si_1+2} \\
&\quad+a
\|u^\delta\|_{L^\infty(\R_+;L^{2\si_2+2})}^{2\si_2+2}+ 
b\|u^\delta\|^{2-\alpha}_{L^\infty(\R_+,L^{2-\alpha}( M))}\Big).
  \end{aligned}
\end{equation}
From the continuity of the flow map $\Sigma\ni u_0\mapsto u^\delta\in
\mathcal{C}(\R_+,\Sigma)$ in Lemma~\ref{lem:cauchyloc}, we deduce that
\eqref{cont} also holds if we only have $u_0\in \Sigma$. Next,
since $(u^\delta)_{0<\delta\le 1}$ is uniformly bounded in
$L^\infty(\R_+,\Sigma)$ and either  $ M$ is compact or we may invoke
Lemma~\ref{lem:GNdual} (recall that on $\R^2$, we assume $\alpha\le
1/2$ in Assumption~\ref{hyp:main}),  
\eqref{cont} gives the existence of a positive constant $C$ such that
for every $t,t'\in \R_+$,
$$\|u^\delta(t)-u^\delta(t')\|_{L^2( M)}\le C|t-t'|^{1/2}.$$
In particular, for any $T>0$, $(u^\delta)_{0<\delta\le 1}$ is a
bounded sequence in $\mathcal{C}([0,T],L^2( M))$ which is uniformly
equicontinuous from $[0,T]$ to $L^2( M)$. Moreover, the compactness of
the embedding $\Sigma\subset L^2(M)$ ensures that for every $t\in
[0,T]$, the set $\{u^\delta(t)|\delta\in (0,1]\}$ is relatively compact in $L^2(M)$. As a result, Arzel\`a--Ascoli
Theorem implies that $(u^{\delta_n})_n$ is relatively compact in
$\mathcal{C}([0,T],L^2( M))$. On the other hand, we already know
from \eqref{convf*} that 
$$u^{\delta_n}\rightharpoonup u\quad \text{in } w*\ L^\infty(\R_+,L^2( M)).$$
Therefore, we infer that $u$ is the unique accumulation point of the sequence
$(u^{\delta_n})_n$ in $\mathcal{C}([0,T], L^2( M))$. Thus
$$u^{\delta_n}\to u\quad \text{in }\mathcal{C}([0,T],L^2( M)),$$
which implies in particular $u\in \mathcal{C}([0,T],L^2( M))$
as well as $u(0)=u^{\delta_n}(0)=u_0$. This is true for any $T>0$,
therefore 
$$u\in \mathcal{C}(\R_+,L^2( M)).$$
Finally, up to the extraction of an other subsequence, 
$u^{\delta_n}(t,x)\to u(t,x)$ for almost every $(t,x)\in
\R_+\times M$. Therefore, for almost every $(t,x)\in
\R_+\times M$ such that $u(t,x)\neq 0$, we have
$$\frac{u^{\delta_n}}{(|u^{\delta_n}|^2+\delta_n)^{\alpha/2}}(t,x)\to
\frac{u}{|u|^\alpha}(t,x).$$
By comparison with \eqref{convf*F}, we deduce that up to a change of
$F$ on a set with zero measure,
$$F(t,x)=\frac{u}{|u|^\alpha}(t,x)\quad\text{(only if }u(t,x)\neq
0\text{ in the case }\alpha=1\text{)},$$ 
which completes the proof of the existence part 
of Theorem~\ref{theo:exist}. 

\subsection{Uniqueness}
\label{sec:uniqueness}

If $u$ and $v$ are two solutions to \eqref{eq:main}, then by
subtracting the two equations, multiplying by $\overline{u-v}$,
integrating over $M$ and taking the imaginary part, we obtain:
\begin{equation}\label{eq:unique}
\begin{aligned}
  \frac{d}{dt}&\|u-v\|_{L^2}^2 +2a \RE\int_M
  \(|u|^{2\si_2}u-|v|^{2\si_2}v\)\overline{u-v}\\
&+ 2 b \RE\int_M
  \(\frac{u}{|u|^{\alpha}}-\frac{v}{|v|^{\alpha}}\)\overline{u-v}= 2\l
    \IM \int_M \(|u|^{2\si_1}u-|v|^{2\si_1}v\)\overline{u-v}.
\end{aligned}
\end{equation}
Extending Lemma~3.1 from \cite{CaGa11}, we have
\begin{lemma}\label{lem:young}
  Let $\si\ge -1$. For all $z_1,z_2\in \C$,
  \begin{equation*}
    \RE \( \(|z_1|^\sigma z_1-|z_2|^{\sigma}z_2\)\(\overline{z_1-z_2}\)\)\ge 0.
  \end{equation*}
\end{lemma}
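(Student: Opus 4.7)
The plan is to reduce the inequality to a one-variable monotonicity statement. After expanding and applying $|\RE(w\bar z)|\le |w||z|$ to the cross terms, the problem will collapse to checking that $x\mapsto x^{\sigma+1}$ is non-decreasing on $[0,\infty)$, which is precisely where the hypothesis $\sigma\ge -1$ enters.

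\smallbreak

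First I would expand
\begin{equation*}
\RE\Bigl(\bigl(|z_1|^\sigma z_1-|z_2|^\sigma z_2\bigr)\overline{(z_1-z_2)}\Bigr)
=|z_1|^{\sigma+2}+|z_2|^{\sigma+2}-\RE\bigl(|z_1|^\sigma z_1\bar z_2\bigr)-\RE\bigl(|z_2|^\sigma z_2\bar z_1\bigr).
\end{equation*}
The two diagonal terms are manifestly non-negative. For the two cross terms I would apply the crude bound
\begin{equation*}
\bigl|\RE(|z_j|^\sigma z_j\bar z_k)\bigr|\le |z_j|^{\sigma+1}|z_k|,
\end{equation*}
which is just $|\RE(w\bar z)|\le |w||z|$ together with $|\,|z_j|^\sigma z_j|=|z_j|^{\sigma+1}$.

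\smallbreak

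Setting $r=|z_1|$ and $s=|z_2|$, the expression is therefore bounded below by
\begin{equation*}
r^{\sigma+2}+s^{\sigma+2}-r^{\sigma+1}s-s^{\sigma+1}r=(r^{\sigma+1}-s^{\sigma+1})(r-s).
\end{equation*}
Since $\sigma+1\ge 0$ by hypothesis, the map $x\mapsto x^{\sigma+1}$ is non-decreasing on $[0,\infty)$, so the two factors share the same sign and the product is non-negative. This yields the claim.

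\smallbreak

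There is really no substantial obstacle; the only mild subtlety is the case $\sigma\in[-1,0)$, where $|z|^\sigma z$ must be interpreted at $z=0$. For $\sigma>-1$ we have $|z|^\sigma z=|z|^{\sigma+1}(z/|z|)\to 0$ as $z\to 0$, so the natural convention is $|0|^\sigma\cdot 0:=0$; for $\sigma=-1$, $|z|^{-1}z$ is the polar factor, and the value at the origin is chosen to be $0$ (as in the definition of $\phi$ in Section~\ref{sec:approximate}). With these conventions the computation above is unchanged, since the bound $|\RE(|z_j|^\sigma z_j\bar z_k)|\le|z_j|^{\sigma+1}|z_k|$ remains valid (both sides vanish when $z_j=0$).
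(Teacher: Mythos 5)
Your proof is correct and is essentially the paper's argument in different clothing: bounding the cross terms by $|\RE(|z_j|^\sigma z_j\bar z_k)|\le |z_j|^{\sigma+1}|z_k|$ is exactly the paper's step of writing $z_j=\rho_je^{i\theta_j}$ and using $\cos(\theta_1-\theta_2)\le 1$, after which both arguments reduce to the same factorization $(\rho_1^{\sigma+1}-\rho_2^{\sigma+1})(\rho_1-\rho_2)\ge 0$ via monotonicity of $x\mapsto x^{\sigma+1}$ (the paper treating $\sigma=-1$ as the trivially zero case, you absorbing it into the monotonicity statement). Your remark on the convention $|0|^\sigma\cdot 0:=0$ for $\sigma\in[-1,0)$ is a harmless and sensible addition.
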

\begin{proof}
  Using polar coordinates, write $z_j=\rho_j e^{i\theta_j}$,
  $\rho_j\ge 0$, $\theta_j\in \R$. The quantity involved in the
  statement is
  \begin{equation*}
    \rho_1^{\si+2}
     +\rho_2^{\si+2} -\rho_1^{\si+1}\rho_2 \cos (\theta_1-\theta_2) -
     \rho_2^{\si+1}\rho_1 \cos (\theta_1-\theta_2). 
  \end{equation*}
Since the cosine function  is bounded by one, the above quantity is
bounded from below by
\begin{equation*}
   \rho_1^{\si+2}
     +\rho_2^{\si+2} -\rho_1^{\si+1}\rho_2-
     \rho_2^{\si+1}\rho_1 =
     \(\rho_1^{\si+1}-\rho_2^{\si+1}\)(\rho_1-\rho_2). 
\end{equation*}
If $\si=-1$, the above quantity is identically zero. If $\si>-1$, then
we conclude by observing that both factors on the right hand side
always have the same sign. 
\end{proof}
If $d=1$, \eqref{eq:unique} and the above lemma yield
\begin{align*}
  \frac{d}{dt}\|u(t)-v(t)\|_{L^2}^2 &\le 2|\l|
     \int_M
     \left|\(|u|^{2\si_1}u-|v|^{2\si_1}v\)\overline{u-v}\right|\\
&\le
     C\(\|u\|_{L^\infty H^1}^{2\si_1} + \|v\|_{L^\infty H^1}^{2\si_1} \)\|u(t)-v(t)\|_{L^2}^2,
\end{align*}
and Gronwall lemma shows that there is at most one (global) weak
solution to \eqref{eq:main}. 
\smallbreak

When $d=2$, in order to overcome the absence of control in
$L^\infty(M)$, we invoke the argument introduced by Yudovitch \cite{Yu63}, and
resumed in the context of nonlinear Schr\"odinger equations in
\cite{OgOz91,Oz95},  and 
by Burq,
G\'erard and Tzvetkov \cite{BGT} in the case of three-dimensional
domains. Since their argument readily works 
in the present context, we simply recall it. 

Denote by $\epsilon(t) = \|u(t)-v(t)\|_{L^2(M)}^2$. For $p$ finite and
large, \eqref{eq:unique}, Lemma~\ref{lem:young} and H\"older
inequality yield 
\begin{align*}
  \dot \epsilon(t)& \le C \int_M \(
  |u(t,x)|^{2\si_1}+|v(t,x)|^{2\si_1}\)|u(t,x)-v(t,x)|^2dx\\
&\le C \(\|u(t)\|_{L^{2p\si_1}}^{2\si_1} + \|v(t)\|_{L^{2p\si_1}}^{2\si_1} \)\|u(t)-v(t)\|_{L^{2p'}}^2,
\end{align*}
where the constant $C$ does not depend on $p$. By interpolation,
\begin{equation*}
  \|u(t)-v(t)\|_{L^{2p'}} \le \|u(t)-v(t)\|_{L^2}^{1-3/2p}\|u(t)-v(t)\|_{L^6}^{3/2p},
\end{equation*}
hence, in view of the boundedness of the $L^\infty_tH^1_x$ norm of $u$ and $v$, and
of Sobolev embedding $H^1(M)\hookrightarrow L^6(M)$,
\begin{equation*}
  \dot\epsilon(t) \le C\(\|u(t)\|_{L^{2p\si_1}}^{2\si_1} +
  \|v(t)\|_{L^{2p\si_1}}^{2\si_1} \) \epsilon(t)^{1-3/2p}. 
\end{equation*}
Gagliardo--Nirenberg inequality implies
\begin{equation*}
  \|u(t)\|_{L^{2p\si_1}}^{2\si_1} +
  \|v(t)\|_{L^{2p\si_1}}^{2\si_1} \le C \([p]!\)^{1/p} \(
  \|u(t)\|_{H^1}^{2\si_1} + \|v(t)\|_{H^1}^{2\si_1} \),
\end{equation*}
with another constant $C$, still independent of $p$ (see
e.g. \cite{Taylor3}). Therefore, using Stirling formula for $p$ large,
\begin{equation*}
   \dot\epsilon(t) \le C  p \epsilon(t)^{1-3/2p}. 
\end{equation*}
By integration in time, under the assumption $\epsilon(0)=0$, we come up
with
\begin{equation*}
  \epsilon(t)^{3/2p}\le Ct,
\end{equation*}
for some constant $C$ independent of $p$. Choosing $t$ sufficiently
small and letting $p\to \infty$, we see that $\epsilon=0$ on some interval
$[0,t_0]$ for some universal constant $t_0$, hence $\epsilon\equiv 0$ by
induction. 
\smallbreak

Therefore, there is at most one (global) weak solution to
\eqref{eq:main}--\eqref{eq:ci}. In addition, by considering $v=0$ in
\eqref{eq:unique}, we see that this solution satisfies
\eqref{eq:mass}.

\section{Finite time extinction in 1D and exponential decay in 2D}
\label{sec:one}

The following lemma follows from inequalities on $\R^d$, adapted from the
Nash inequality \cite{Na58} (see \cite{CaGa11}):
\begin{lemma}\label{lem:Nash}
  Let $M$ be as in Assumption~\ref{hyp:main}. Let $\alpha\in
  ]0,1]$. There exists $C>0$ such that 
 \begin{align}
    \|f\|_{L^2(M)}^{\alpha d + 4-2\alpha}&\le C
    \(\|f\|^{2-\alpha}_{L^{2-\alpha}(M)}\)^{2}\|f\|_{H^1(M)}^{\alpha
      d},\quad \forall f\in 
    H^1(M).\label{eq:nash1}\\ 
\|f\|_{L^2(M)}^{\alpha d + 8-4\alpha }&\le C
    \(\|f\|^{2-\alpha}_{L^{2-\alpha}(M)}\)^{4}\|f\|_{H^2(M)}^{\alpha
      d},\quad \forall f\in
    H^2(M).\label{eq:nash2}
  \end{align} 
If $M=\R^d$, then the inhomogeneous Sobolev norm
$\|\cdot\|_{H^s(\R^d)} $ can be replaced by the homogeneous norm
$\|\cdot\|_{\dot H^s(\R^d)} $.
\end{lemma}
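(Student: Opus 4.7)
The plan is to derive both inequalities by combining a H\"older interpolation of $\|f\|_{L^2}$ between $L^{2-\alpha}$ and a higher Lebesgue norm with the Gagliardo--Nirenberg inequality of Lemma~\ref{lem:GN}.

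First, for an exponent $r>2$ (with $r<\infty$ when $d=2$) I would pick the unique $\theta\in(0,1)$ satisfying
\begin{equation*}
\frac{1}{2}=\frac{\theta}{2-\alpha}+\frac{1-\theta}{r},
\end{equation*}
so that H\"older's inequality yields $\|f\|_{L^2}\le\|f\|_{L^{2-\alpha}}^{\theta}\|f\|_{L^r}^{1-\theta}$. Then I would invoke Lemma~\ref{lem:GN} to control the $L^r$ factor,
\begin{equation*}
\|f\|_{L^r}\le C\|f\|_{L^2}^{1-\delta(r)}\|f\|_{H^1}^{\delta(r)},\qquad \delta(r)=d\(\frac{1}{2}-\frac{1}{r}\),
\end{equation*}
substitute into the previous bound, absorb the resulting $\|f\|_{L^2}$ factor on the right back into the left, and finally raise to a convenient power to produce \eqref{eq:nash1}. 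For $d=1$ the natural choice is $r=\infty$, which just amounts to the Sobolev embedding $H^1(M)\hookrightarrow L^\infty(M)$; for $d=2$ any finite $r>2$ will do, and the resulting exponents turn out to be independent of $r$, so one may for instance take $r=4$.

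For \eqref{eq:nash2} the strategy is identical, except that the $L^r$ norm is controlled by the $H^2$-version of Gagliardo--Nirenberg, in which the exponent $\delta(r)$ is replaced by $\delta(r)/2$. Halving the Sobolev exponent forces one to raise the resulting base inequality to twice as high a power, which is exactly what turns the factor $(2-\alpha)\cdot 2$ in the exponent of $\|f\|_{L^{2-\alpha}}$ on the right of \eqref{eq:nash1} into $(2-\alpha)\cdot 4$ in \eqref{eq:nash2}. The assertion on $M=\R^d$ that $H^s$ may be replaced by $\dot H^s$ follows because Lemma~\ref{lem:GN} itself admits the homogeneous form in that setting.

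I do not foresee a genuine analytic obstacle: the content of the proof is simply the bookkeeping needed to verify that the weights $\theta$, $1-\theta$, and $\delta(r)$ recombine into the precise exponents $\alpha d+4-2\alpha$, $2(2-\alpha)$, $\alpha d$ (and their doubled counterparts in the $H^2$ variant). This algebraic step, mildly tedious but straightforward, is the only place where care will be needed.
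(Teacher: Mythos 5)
Your argument is correct, and it is essentially the expected one: the paper gives no proof of Lemma~\ref{lem:Nash}, referring instead to Nash's inequality and to \cite{CaGa11}, and the route there is the same H\"older-plus-Gagliardo--Nirenberg interpolation you propose. The bookkeeping does close: with $\frac12=\frac{\theta}{2-\alpha}+\frac{1-\theta}{r}$ one gets $\|f\|_{L^2}^{\theta+(1-\theta)\delta(r)}\le C\|f\|_{L^{2-\alpha}}^{\theta}\|f\|_{H^1}^{(1-\theta)\delta(r)}$ after absorbing the $L^2$ factor (harmless, since the case $\|f\|_{L^2}=0$ or $\|f\|_{L^{2-\alpha}}=\infty$ is trivial), and the ratio $\theta:(1-\theta)\delta(r)$ equals $2(2-\alpha):\alpha d$ for every admissible $r$, so raising to the appropriate power gives exactly \eqref{eq:nash1}; the homogeneity of both sides matches, confirming the exponents. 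The only step you should make explicit is the ``$H^2$-version'' of Gagliardo--Nirenberg with exponent $\delta(r)/2$, which is not contained in Lemma~\ref{lem:GN} as stated: it follows from Lemma~\ref{lem:GN} combined with the interpolation $\|f\|_{H^1}^2\lesssim \|f\|_{L^2}\|f\|_{H^2}$ (valid on compact $M$ and, in homogeneous form $\|\nabla f\|_{L^2}^2\le \|f\|_{L^2}\|\Delta f\|_{L^2}$, on $\R^d$, which also settles the $\dot H^2$ claim). In fact, once you have \eqref{eq:nash1} you can skip the second interpolation computation altogether: inserting $\|f\|_{H^1}^{\alpha d}\le \|f\|_{L^2}^{\alpha d/2}\|f\|_{H^2}^{\alpha d/2}$ into \eqref{eq:nash1}, absorbing the $L^2$ factor and squaring yields \eqref{eq:nash2} directly, which is a slightly cleaner way to organize the same argument.
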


\subsection{Proof of Corollary~\ref{cor:one}}
\label{sec:proof1D}

Suppose that $d=1$ in Theorem~\ref{theo:exist}. 
In view of \eqref{eq:mass}, we have
\begin{equation*}
   \frac{d}{dt}\|u(t)\|_{L^2}^2 +2b
    \int_M |u(t,x)|^{2-\alpha}dx \le 0 . 
\end{equation*}
Theorem~\ref{theo:exist} and Lemma~\ref{lem:Nash} yield
\begin{equation*}
  \frac{d}{dt}\|u(t)\|_{L^2}^2 +Cb \|u(t)\|_{L^2}^{2-\alpha/2}\le 0, 
\end{equation*}
where $C$ is proportional to
$\|u\|_{L^\infty(\R_+;H^1)}^{-\alpha/2}$. 
By integration, we deduce, as long as $\|u(t)\|_{L^2}$ is not zero,
\begin{equation*}
  \|u(t)\|_{L^2} \le \(\|u_0\|_{L^2}^{\alpha/2} -Cb t\)^{2/\alpha}. 
\end{equation*}
Corollary~\ref{cor:one} then follows. 

\subsection{First part of Theorem~\ref{theo:two}}
\label{sec:two-init}

Suppose now that $d=2$ in Theorem~\ref{theo:exist}. 
In view of \eqref{eq:mass}, we have
\begin{equation*}
   \frac{d}{dt}\|u(t)\|_{L^2}^2 +2b
    \int_M |u(t,x)|^{2-\alpha}dx \le 0 . 
\end{equation*}
Theorem~\ref{theo:exist} and Lemma~\ref{lem:Nash} yield
\begin{equation*}
  \frac{d}{dt}\|u(t)\|_{L^2}^2 +Cb \|u(t)\|_{L^2}^{2}\le 0, 
\end{equation*}
where $C$ is proportional to
$\|u\|_{L^\infty(\R_+;H^1)}^{-\alpha}$. 
By integration, we deduce the first part of Theorem~\ref{theo:two},
that is, the exponential decay of $\|u(t)\|_{L^2(M)}$.

\section{Higher order estimates}
\label{sec:high-order-estim}

As in \cite{CaGa11}, the exponential decay in 2D obtained in the previous
section can be improved to get finite time extinction provided that we
invoke the Nash in equality \eqref{eq:nash2} rather than merely
\eqref{eq:nash1}. This requires of course to control the $H^2$-norm of
$u$.
In order to obtain bounds in $\Sigma^2$, we resume the idea due to
Kato \cite{Kato87} (see also \cite{CazCourant}): to obtain estimates of
order two in space, it suffices to obtain estimates of order one in
time, and to use the equation to relate these quantities. 

\subsection{Evolution of the time derivative}
\label{sec:evol-time-deriv}

Using directly \eqref{eq:main}, for a global weak solution provided by
Theorem~\ref{theo:exist}, we obtain
\begin{align*}
\frac{d}{dt}\|\d_t u\|_{L^2}^2 & = 2\RE \int_M \d_t \bar u \d_t^2
u\\
&= 2\l \IM\int_M \d_t \bar u \d_t\(|u|^{2\si_1}u\) - 2a \RE
\int_M \d_t \bar u \d_t \(|u|^{2\si_2}u\) \\
&\quad -2b\RE \int_M \d_t
\bar u \d_t \(\frac{u}{|u|^{\alpha}}\).   
\end{align*}
For the first term of the right hand side, we use the identity
\begin{equation}\label{eq:imdt}
  \IM\int_M \d_t \bar u \d_t\(|u|^{2\si_1}u\)  =\frac{d}{dt}\(\IM
  \int_M |u|^{2\si_1}u\d_t \bar u\) - \IM\int_M
  |u|^{2\si_1}u\d_t^2 \bar u. 
\end{equation}
The full derivative will be incorporated into the first higher energy,
so we focus on the last term. From the equation,
\begin{align*}
   - \IM\int_M
  |u|^{2\si_1}u\d_t^2 \bar u & = - \IM\int_M
  |u|^{2\si_1}u \d_t \( -\frac{i}{2}\Delta \bar u +iV \bar u +i \l
  |u|^{2\si_1} \bar u -a  |u|^{2\si_2}\bar u -b
  \frac{\bar u}{|u|^{\alpha}}\)\\
& = \frac{1}{2}\RE\int_M |u|^{2\si_1} u\d_t \Delta \bar u
-\frac{1}{2(\si_1+1)}\frac{d}{dt}\int_M  V|u|^{2\si_1+2}\\
&\quad -\frac{\l}{2}\frac{d}{dt}\int_M |u|^{4\si_1+2} +a\IM \int_M
|u|^{2\si_1}u \d_t\(|u|^{2\si_2}\bar u\) \\
&\quad +b \IM \int_M
|u|^{2\si_1} u \d_t \( \frac{\bar u}{|u|^{\alpha}}\)
\end{align*}
For the first term, we invoke the fact that the Laplacian is
self-adjoint,and use the identity
\begin{equation*}
  \Delta \(|u|^{2\si_1} u\) = u\Delta|u|^{2\si_1}  +2\nabla u\cdot
  \nabla |u|^{2\si_1} +|u|^{2\si_1}\Delta u.
\end{equation*}
We also compute
\begin{equation*}
  \Delta|u|^{2\si_1} = \si_1(\si_1-1)|u|^{2\si_1-4}\left|
    \nabla|u|^2\right|^2 +2\si_1|u|^{2\si_1-2}\(|\nabla u|^2
  +\RE\(\bar u \Delta u\)\). 
\end{equation*}
Therefore,
\begin{align*}
  \frac{1}{2}\RE \int_Mu \d_t \bar u  \Delta |u|^{2\si_1} & =
  \frac{ \si_1(\si_1-1)}{2} \RE \int_M u\d_t \bar u |u|^{2\si_1-4}\left|
    \nabla|u|^2\right|^2 \\
&\quad +\si_1 \RE \int_M u\d_t \bar u \(|\nabla u|^2
  +\RE\(\bar u \Delta u\)\)|u|^{2\si_1-2}.
\end{align*}
The first two terms can be factored out in a more concise way in order
to emphasize an exact time derivative:
\begin{equation*}
  \frac{ \si_1(\si_1-1)}{2} \RE \int_M u\d_t \bar u |u|^{2\si_1-4}\left|
    \nabla|u|^2\right|^2  = \frac{\si_1}{4}\int_M \d_t |u|^{2\si_1-2} \left|
    \nabla|u|^2\right|^2 ,
\end{equation*}
and
\begin{equation*}
  \si_1 \RE \int_M u\d_t \bar u |\nabla u|^2 |u|^{2\si_1-2} =
  \frac{1}{2}\int_M \d_t |u|^{2\si_1}  |\nabla u|^2 . 
\end{equation*}
We compute $\RE(\bar u \Delta u)$ by using \eqref{eq:main}:
\begin{equation*}
  \RE(\bar u \Delta u)= 2\IM \(\bar u \d_t u \) +2V |u|^2 +2\l
  |u|^{2\si_1+2}, 
\end{equation*}
and we end up with
\begin{align*}
   \frac{1}{2}\RE \int_M u\d_t \bar u  \Delta |u|^{2\si_1} & =
   \frac{\si_1}{4}\int_M \d_t |u|^{2\si_1-2} \left| 
    \nabla|u|^2\right|^2 + \frac{1}{2}\int_M \d_t |u|^{2\si_1}
  |\nabla u|^2 \\
&\quad +\int_M \d_t|u|^{2\si_1} \IM \(\bar u \d_t u\) \\
& \quad +\frac{\si_1}{\si_1+1}\frac{d}{dt}\int_M V|u|^{2\si_1+2}
+\frac{\l \si_1}{2\si_1+1}\frac{d}{dt}\int_M |u|^{4\si_1+2}. 
\end{align*}
We also note that
\begin{equation*}
  \int_M \d_t|u|^{2\si_1} \IM \(\bar u \d_t u\)  = \IM \int_M \d_t u
  \d_t \(|u|^{2\si_1} \bar u\) = - \IM \int_M \d_t \bar u
  \d_t \(|u|^{2\si_1} u\),
\end{equation*}
so that we recover the left hand side of \eqref{eq:imdt}, with the
opposite sign. Therefore, we have
\begin{align*}
  2 \IM \int_M \d_t \bar u
  \d_t \(|u|^{2\si_1} u\) & =\frac{d}{dt}\( \IM
  \int_M |u|^{2\si_1}u\d_t \bar u +\frac{2\si_1-1}{2\si_1+2}\int_M
  V|u|^{2\si_1+2}\)\\
&\quad  -\frac{d}{dt}\(\frac{\l}{4\si_1+2}\int_M |u|^{4\si_1+2}\)\\
&\quad +  \frac{\si_1}{4}\int_M \d_t |u|^{2\si_1-2} \left| 
    \nabla|u|^2\right|^2 + \frac{1}{2}\int_M \d_t |u|^{2\si_1}
  |\nabla u|^2 \\
&\quad + \RE \int_M \d_t \bar u \nabla u\cdot \nabla |u|^{2\si_1}
+\frac{1}{2}\RE \int_M |u|^{2\si_1}\d_t \bar u \Delta u. 
\end{align*}
For the last term, we use \eqref{eq:main} to substitute $\Delta u$:
\begin{align*}
  \frac{1}{2}\RE \int_M |u|^{2\si_1}\d_t \bar u \Delta u &= 
\RE \int_M  |u|^{2\si_1}\d_t \bar u \( Vu +\l |u|^{2\si_1}u -ia
|u|^{2\si_2}u -ib \frac{u}{|u|^{\alpha}}\)\\
&= \frac{1}{2\si_1+2}\frac{d}{dt}\int_M V |u|^{2\si_1+2}+
\frac{\l}{4\si_1 +2}\frac{d}{dt}\int_M |u|^{4\si_1+2}\\
&\quad +a \IM \int_M |u|^{2\si_1+2\si_2} u\d_t \bar u + b \int_M
|u|^{2\si_1-\alpha} \IM \( u\d_t \bar u\). 
\end{align*}
At this stage, we infer
\begin{align*}
  \frac{d}{dt}& \( \|\d_t u\|_{L^2}^2 -\l \int_M |u|^{2\si_1}\IM \(u\d_t
  \bar u\) -\frac{\l \si_1}{\si_1+1}\int_M V |u|^{2\si_1+2}\)=\\
& \frac{\l \si_1}{4}\int_M \d_t |u|^{2\si_1-2}  \left| 
    \nabla|u|^2\right|^2 + \frac{\l}{2}\int_M \d_t |u|^{2\si_1}
  |\nabla u|^2+\l \RE \int_M \d_t \bar u \nabla u\cdot \nabla
  |u|^{2\si_1}\\
+ & \l a \int_M |u|^{2\si_1+2\si_2}\IM\( u \d_t \bar u\) +\l b \int_M
|u|^{2\si_1-\alpha}\IM\( u \d_t \bar u\)  \\
-& 2a \RE \int_M \d_t \bar u \d_t \( |u|^{2\si_2}u \) -2b \RE \int_M
\d_t \bar u \d_t \(\frac{u}{|u|^\alpha}\). 
\end{align*}
The final simplification consists in developing the last two terms in
the following fashion:
\begin{align*}
  \RE \int_M \d_t \bar u \d_t \( |u|^p u \) &= \(\frac{p}{2}+1\)\int_M
  |u|^{p} |\d_t u|^2 +\frac{p}{2} \int_M |u|^{p-2} \RE \(u \d_t
  \bar u\)^2 \\
 &= \(\frac{p}{2}+1\)\int_M
  |u|^{p} |\d_t u|^2 \\
&\quad +\frac{p}{2} \int_M |u|^{p-2}\( \(\RE u \d_t
  \bar u\)^2 - \(\IM u\d_t \bar u \)^2\)\\
&= \(\frac{p}{2}+1\)\int_M
  |u|^{p-2} \( \(\RE u \d_t
  \bar u\)^2 + \(\IM u\d_t \bar u \)^2\) \\
&\quad +\frac{p}{2} \int_M |u|^{p-2}\( \(\RE u \d_t
  \bar u\)^2 - \(\IM u\d_t \bar u \)^2\)\\
&= \(p+1\)\int_M
  |u|^{p-2}  \(\RE u \d_t
  \bar u\)^2  + \int_M |u|^{p-2} \(\IM u\d_t \bar u \)^2.
\end{align*}
We conclude:
\begin{proposition}\label{prop:dtu}
 Let $u_0\in\Sigma^2$. In either of the cases considered in
 Theorem~\ref{theo:exist}, the global weak solution $u$ satisfies:
\begin{align*}
  \frac{d}{dt}&\(\|\d_t u\|_{L^2}^2 -\l \IM\int_M |u|^{2\si_1}u\d_t
  \bar u -\frac{\l \si_1}{\si_1+1}\int_M V |u|^{2\si_1+2}\)=\\
&\frac{\l\si_1}{4}\int_M\d_t|u|^{2\si_1-2}\left|\nabla
  |u|^2\right|^2 +\frac{\l}{2}\int_M\d_t|u|^{2\si_1} |\nabla
u|^2+\l \RE \int_M \d_t \bar u \nabla u \cdot \nabla |u|^{2\si_1}\\ 
+&\l
a \int_M|u|^{2\si_1+2\si_2}\IM \(u\d_t \bar u\) +\l b \int_M
|u|^{2\si_1-\alpha}\IM\(u\d_t \bar u\) \\
-&2a (2\si_2+1)\int_M
|u|^{2\si_2-2}\(\RE u\d_t \bar u\)^2
- 2a\int_M
|u|^{2\si_2-2}\(\IM u\d_t \bar u\)^2\\
-&2b(1-\alpha)\int_M
|u|^{-2-\alpha}\(\RE u\d_t \bar u\)^2 -2b\int_M
|u|^{-2-\alpha}\(\IM u\d_t \bar u\)^2 .
\end{align*}
\end{proposition}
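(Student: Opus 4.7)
The plan is to establish the identity first at the level of the regularised approximation $u^\delta$ from \eqref{eq:approx} (for which the smoothness of the nonlinearity and the hypothesis $u_0\in\Sigma^2$ give enough regularity to differentiate freely), and then pass to the limit $\delta\to 0$ using the strong convergence obtained in Section~\ref{sec:convergence}. Starting from
\[
\frac{d}{dt}\|\d_t u^\delta\|_{L^2}^2 = 2\RE\int_M \d_t\bar u^\delta\,\d_t^2 u^\delta,
\]
I replace $\d_t^2 u^\delta$ by differentiating \eqref{eq:approx} in time. The Hamiltonian part $-\frac{1}{2}\Delta+V$ produces a purely imaginary integral under $\RE$ (an $i$ factors out in front of a real quantity) and drops out, so only the three nonlinear contributions, indexed by $\l$, $a$, $b$, survive and can be handled independently.

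For the focusing contribution, the key step is the Leibniz-type identity \eqref{eq:imdt}, which isolates an exact time derivative $\frac{d}{dt}\IM\int|u|^{2\si_1}u\,\d_t\bar u$ to be moved to the left-hand side, and leaves a residual $-\IM\int|u|^{2\si_1}u\,\d_t^2\bar u$ in which \eqref{eq:main} is substituted once more for $\d_t\bar u$. I then move the Laplacian onto $|u|^{2\si_1}u$ by self-adjointness, expand $\Delta(|u|^{2\si_1}u)$ by the product rule, use the pointwise identity for $\Delta|u|^{2\si_1}$, and substitute $\RE(\bar u\Delta u)=2\IM(\bar u\d_t u)+2V|u|^2+2\l|u|^{2\si_1+2}$ (again from \eqref{eq:main}). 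Several additional exact time derivatives emerge: those proportional to $\int V|u|^{2\si_1+2}$ combine to produce the coefficient $-\frac{\l\si_1}{\si_1+1}$ on the left-hand side, while the various $\int|u|^{4\si_1+2}$ contributions cancel out. Finally, the terms proportional to $\IM(\bar u\d_t u)$ reproduce the left-hand side of \eqref{eq:imdt}, effectively doubling it and accounting for the factor $2$ in front of the modified energy.

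For the dissipative contributions, the useful algebraic identity (obtained by writing $\d_t|u|^p = p|u|^{p-2}\RE(u\d_t\bar u)$ and splitting $|u|^2|\d_t u|^2 = (\RE u\d_t\bar u)^2 + (\IM u\d_t\bar u)^2$) is
\[
\RE\int_M \d_t\bar u\,\d_t(|u|^p u) = (p+1)\int_M |u|^{p-2}(\RE u\d_t\bar u)^2 + \int_M |u|^{p-2}(\IM u\d_t\bar u)^2,
\]
applied with $p=2\si_2$ (coefficient $-2a$) and with $p=-\alpha$ (coefficient $-2b$). This directly produces the four quadratic terms $-2a(2\si_2+1),\,-2a,\,-2b(1-\alpha),\,-2b$ on the right-hand side.

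The only real obstacle is the combinatorial bookkeeping in the focusing piece: keeping track of the many terms produced by two successive substitutions of \eqref{eq:main}, and recognising which combinations collapse into the total time derivatives that get absorbed into the modified energy on the left. Once the identity is established for $u^\delta$, the passage to the limit $\delta\to 0$ is routine, relying on the uniform $\Sigma^2$ bound (itself proved by a Gronwall argument on this very identity, once it is used as the starting point of the higher-order estimates) together with the convergence established in Section~\ref{sec:convergence}.
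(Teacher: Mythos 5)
Your computation is essentially the paper's own proof: the same starting identity $\frac{d}{dt}\|\d_t u\|_{L^2}^2=2\RE\int\d_t\bar u\,\d_t^2u$ with the Hamiltonian part dropping out, the same use of \eqref{eq:imdt} with two successive substitutions of the equation so that the $V|u|^{2\si_1+2}$ derivatives collect into the $-\frac{\l\si_1}{\si_1+1}$ term, the $|u|^{4\si_1+2}$ derivatives cancel, and the recovered copy of \eqref{eq:imdt} doubles the left-hand side, together with the identity $\RE\int\d_t\bar u\,\d_t(|u|^pu)=(p+1)\int|u|^{p-2}(\RE u\d_t\bar u)^2+\int|u|^{p-2}(\IM u\d_t\bar u)^2$ for $p=2\si_2$ and $p=-\alpha$. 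The only difference is your outer layer of regularizing via \eqref{eq:approx} and passing to the limit, whereas the paper performs the computation directly (formally) on the weak solution $u$; that is a reasonable, if slightly more delicate, way to justify the same identity.
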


\subsection{From order one in time to order two in space}
\label{sec:from-order-one}

We rewrite the quantity involved in Proposition~\ref{prop:dtu} in
order to get rid of all time derivatives:
\begin{align*}
 \|\d_t u\|_{L^2}^2 -\l \IM\int |u|^{2\si_1}u\d_t
  \bar u -\frac{\l \si_1}{\si_1+1}\int  V |u|^{2\si_1+2}& =
 \RE\int \( \d_t u +i\l |u|^{2\si_1}u\)\d_t \bar u  \\
&\quad -\frac{\l
  \si_1}{\si_1+1}\int V |u|^{2\si_1+2} 
\end{align*}
Leaving out the real part for one moment, the first integral on the
right hand side is rewritten as
\begin{align*}
 \int \( \frac{i}{2}\Delta u -iV u -a |u|^{2\si_2}u
-b\frac{u}{|u|^\alpha}\)\(-\frac{i}{2} \Delta \bar u +iV\bar u +i\l
|u|^{2\si_1}\bar u -a |u|^{2\si_2}\bar u -b \frac{\bar
  u}{|u|^\alpha}\),
\end{align*}
whose real part is equal to:
\begin{align*}
  &\frac{1}{4}\|\Delta u\|_{L^2}^2 -\RE \int V\bar
    u \Delta u - \frac{\l}{2}\RE \int |u|^{2\si_1}\bar u \Delta u +a
    \IM \int |u|^{2\si_2}\bar u \Delta u \\
&+ b\IM\int \frac{\bar
      u}{|u|^{\alpha}} \Delta u +\int
    V^2|u|^2 +\frac{\l}{\si_1+1}\int V |u|^{2\si_1+2} +a^2 \int
    |U|^{4\si_2+2} \\
&+2ab\int |u|^{2\si_2+2-\alpha} + b^2\int
    |u|^{2-2\alpha}. 
\end{align*}
Note that it is in order for the last term to belong to some
reasonable Lebesgue space that we assume $\alpha\le 1/2$ in the case
where $M=\R^2$. By integration by parts, we can also write
\begin{align*}
  -\RE \int V\bar u \Delta u &= \int V|\nabla u|^2 -\frac{1}{2}\int
  |u|^2 \Delta V,\\
 - \frac{\l}{2}\RE \int |u|^{2\si_1}\bar u \Delta u &=
\l\frac{\si_1+1}{2}\int |u|^{2\si_1}|\nabla u|^2 +\l
\frac{\si_1}{2}\RE \int |u|^{2\si_1-2}\bar u ^2 (\nabla u)^2,\\
 a \IM \int |u|^{2\si_2}\bar u \Delta u & = -a\si_2 \IM \int
|u|^{2\si_2-2}\bar u^2 (\nabla u)^2. 
\end{align*}
Gathering all the terms together, this leads us to setting as a second
order energy:
\begin{equation*}
\begin{aligned}
  {\mathcal E}_2(t)&:= \frac{1}{4}\|\Delta u\|_{L^2}^2 + \int
    V^2|u|^2 + \int V|\nabla u|^2 \\
&\quad +a^2 \int
    |u|^{4\si_2+2}+2ab\int |u|^{2\si_2+2-\alpha} + b^2\int
    |u|^{2-2\alpha}\\
&\quad -\frac{1}{2}\int  |u|^2 \Delta V +\l\frac{\si_1+1}{2}\int
|u|^{2\si_1}|\nabla u|^2  +\l
\frac{\si_1}{2}\RE \int |u|^{2\si_1-2}\bar u ^2 (\nabla u)^2 \\
&\quad  -a\si_2 \IM \int
|u|^{2\si_2-2}\bar u^2 (\nabla u)^2 + b\IM\int \frac{\bar
      u}{|u|^{\alpha}} \Delta u  +\frac{\l}{\si_1+1}\int V
    |u|^{2\si_1+2}. 
\end{aligned}
\end{equation*}
\begin{lemma}\label{lem:E2}
Let $u$ be given by Theorem~\ref{theo:exist}. 
\begin{itemize}
\item There exists $C>0$
  such that for all $t\ge 0$, 
  \begin{equation*}
    \frac{1}{C} \|u(t)\|_{\Sigma^2}^2 \le  {\mathcal E}_2(t) \le C
  \|u(t)\|_{\Sigma^2}^2+C. 
  \end{equation*}
\item There exists $C$ such that for all $t\ge 0$, $\|\d_t
  u(t)\|_{L^2}^2 \le C  {\mathcal E}_2(t)$. 
\end{itemize}
\end{lemma}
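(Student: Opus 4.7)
The plan is to treat $\mathcal{E}_2(t)$ as a controllable perturbation of its two leading positive quantities $\tfrac{1}{4}\|\Delta u(t)\|^2_{L^2}$ and $\int V^2|u(t)|^2$, and to show that every other summand is controlled uniformly in time by combining the a priori information available from Theorem~\ref{theo:exist}: the $L^\infty(\R_+;\Sigma)$ estimate on $u$, the mass non-increase~\eqref{eq:mass}, and, when $M=\R^d$, the dual Gagliardo--Nirenberg inequality of Lemma~\ref{lem:GNdual}.

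For the norm equivalence I would proceed term by term. On $\R^d$, where $V\sim |x|^2$, one has $\|Vu\|_{L^2}^2\lesssim \||x|^2u\|_{L^2}^2\le \|u\|^2_{\Sigma^2}$, while $\int V|\nabla u|^2\lesssim \||x|\nabla u\|^2_{L^2}$; the latter reduces, after integration by parts in the identity $\int |x|^2|\nabla u|^2=-\RE\int\overline{u}\,\diver(|x|^2\nabla u)$, to quantities controlled by $\|u\|_{\Sigma^2}^2$ via Cauchy--Schwarz. The purely Lebesgue contributions $\int|u|^q$ for $q\in\{4\si_2+2,\ 2\si_2+2-\alpha,\ 2-2\alpha,\ 2\si_1+2,\ 4\si_1+2\}$, as well as $\int V|u|^{2\si_1+2}$ and $\int |u|^2\Delta V$, and the derivative-nonlinearity crossterms $\int |u|^{2\si}|\nabla u|^2$ and $\int |u|^{2\si-2}\bar u^2(\nabla u)^2$, are all bounded by an absolute constant: in 1D via Sobolev $H^1\hookrightarrow L^\infty$, in 2D via $H^1\hookrightarrow L^q$ ($q<\infty$), and in the sublinear case on $\R^d$ via Lemma~\ref{lem:GNdual}, all combined with the uniform $H^1$-bound from Proposition~\ref{prop:apriori}. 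Finally, the indefinite sublinear term $b\,\IM\!\int\frac{\bar u}{|u|^\alpha}\Delta u$ is estimated by Cauchy--Schwarz as $b\|u\|^{1-\alpha}_{L^{2-2\alpha}}\|\Delta u\|_{L^2}$ and split by Young's inequality into $\eta\|\Delta u\|^2_{L^2}$ plus a uniformly bounded contribution. Putting everything together yields $\mathcal{E}_2(t)\le C\|u(t)\|^2_{\Sigma^2}+C$. For the lower bound the same estimates yield $\mathcal{E}_2(t)\ge\tfrac{1}{8}\|\Delta u\|^2_{L^2}+\int V^2|u|^2-C$ upon taking $\eta$ small; combined with $\|u\|^2_{\Sigma^2}\lesssim\|\Delta u\|^2_{L^2}+\|Vu\|^2_{L^2}+1$ (the $\|u\|_{L^2}$ and $\|\nabla u\|_{L^2}$ pieces being uniformly bounded a priori), this gives the first claim.

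For the estimate on $\|\d_t u\|^2_{L^2}$ I would use \eqref{eq:main} to write $\d_t u$ as the sum of five terms and apply the triangle inequality in $L^2$. The squared norms $\|\tfrac12\Delta u\|^2_{L^2}$ and $\|Vu\|^2_{L^2}$ appear up to an absolute constant in $\mathcal{E}_2$, the two damping terms contribute $a^2\int|u|^{4\si_2+2}$ and $b^2\int|u|^{2-2\alpha}$ which are present verbatim in $\mathcal{E}_2$, and the remaining focusing contribution $\l^2\int|u|^{4\si_1+2}$ is bounded by an absolute constant by the same Sobolev/Gagliardo--Nirenberg arguments. This gives $\|\d_t u\|^2_{L^2}\le C\mathcal{E}_2(t)+C$; the additive constant is in turn absorbed by noting that the focusing contribution is of strictly higher order in $u$ than the sublinear term $b^2\int|u|^{2-2\alpha}$ already present in $\mathcal{E}_2$, so a final application of Young's inequality concludes.

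The main obstacle is the indefiniteness of $\mathcal{E}_2$ when $\l<0$: several of its summands (the $\l$-terms, the imaginary crossterm $-a\si_2\IM\int|u|^{2\si_2-2}\bar u^2(\nabla u)^2$, and the sublinear cross-term $b\,\IM\!\int\frac{\bar u}{|u|^\alpha}\Delta u$) carry no definite sign and could in principle overwhelm the leading coercive contributions. The delicate step is therefore to ensure via Young's inequality with small constants that all such indefinite pieces can be absorbed into $\tfrac14\|\Delta u\|^2_{L^2}$ plus a uniformly-bounded remainder, without destroying the coercivity of $\mathcal{E}_2$ over the $\Sigma^2$-norm. This works precisely because every indefinite piece is of \emph{strictly lower order} than the leading $\tfrac14\|\Delta u\|^2_{L^2}$, thanks to the uniform $H^1$-bound from Proposition~\ref{prop:apriori}; the restriction $\alpha\le\tfrac12$ in dimension two (Assumption~\ref{hyp:main}) is precisely what makes the sublinear norm $\|u\|_{L^{2-2\alpha}}$ meaningful here.
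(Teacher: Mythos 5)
Your overall architecture for the first bullet (leading terms $\tfrac14\|\Delta u\|_{L^2}^2+\int V^2|u|^2$, everything else controlled or absorbed) is the paper's, and your second bullet takes a different route; but there are concrete gaps in both. First, in dimension two your claim that the gradient-weighted terms $\int|u|^{2\si_1}|\nabla u|^2$, $\RE\int|u|^{2\si_1-2}\bar u^2(\nabla u)^2$ and $\IM\int|u|^{2\si_2-2}\bar u^2(\nabla u)^2$ are ``bounded by an absolute constant via $H^1\hookrightarrow L^q$'' is false: the a priori bound only gives $\nabla u\in L^2$, while these integrals require $\nabla u$ in some $L^p$ with $p>2$. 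The paper handles exactly this point by Gagliardo--Nirenberg applied to $\nabla u$ (Lemma~\ref{lem:GN}): $\int|u|^{2\si}|\nabla u|^2\le \|u\|_{L^{2/\eps}}^{2\si}\|\nabla u\|_{L^{2/(1-\eps)}}^2\lesssim \|u\|_{H^1}^{2\si}\|\nabla u\|_{L^2}^{2(1-\eps)}\|\Delta u\|_{L^2}^{2\eps}$, i.e.\ a \emph{fractional} power of $\|\Delta u\|_{L^2}$, which is then absorbed by Young. Your closing remark that the indefinite pieces are of ``strictly lower order thanks to the uniform $H^1$-bound'' is not a proof of this: the lower-order character comes from trading a power $\|\Delta u\|_{L^2}^{2\eps}$, an interpolation step absent from your argument. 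A second, related gap is the endpoint $\alpha=\tfrac12$ on $\R^2$, which Assumption~\ref{hyp:main} allows: Lemma~\ref{lem:GNdual} only covers $p<\infty$ in 2D, hence $L^{p'}$ with $p'>1$, so it does not control $\|u\|_{L^1}=\int|u|^{2-2\alpha}$ there. The paper needs the separate inequality \eqref{eq:GNdual2}, $\|f\|_{L^1(\R^2)}\lesssim\|f\|_{L^2}^{1/2}\||x|^2f\|_{L^2}^{1/2}$, and accordingly these contributions are controlled by $\|u\|_{\Sigma^2}$ (then absorbed), not by an absolute constant as you assert; in particular the ``uniformly bounded contribution'' left after your Young step in the term $b\,\IM\int\frac{\bar u}{|u|^\alpha}\Delta u$ is not uniformly bounded at this endpoint.

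For the second bullet the paper does something much shorter than what you propose: by construction ${\mathcal E}_2=\|\d_t u\|_{L^2}^2-\l\,\IM\int|u|^{2\si_1}u\,\d_t\bar u-\frac{\l\si_1}{\si_1+1}\int V|u|^{2\si_1+2}$, and the two $\l$-terms are absorbed by Cauchy--Schwarz, Sobolev and Young, using that they involve powers $\|u\|_{\Sigma}^{2\si_1+2}$, $\|u\|_{\Sigma}^{4\si_1+2}$ of order at least two, hence $\lesssim\|u\|_{\Sigma}^2\lesssim{\mathcal E}_2$ by the first bullet and the a priori $\Sigma$-bound. Your route (write $\d_t u$ from the equation and use the triangle inequality) makes the term $b^2\int|u|^{2-2\alpha}$ appear, and here your argument breaks: this quantity is of order $2-2\alpha<2$ in $u$, so for small $u$ it cannot be dominated by $C\|u\|_{\Sigma^2}^2\lesssim{\mathcal E}_2$; the fact that it appears ``verbatim'' in ${\mathcal E}_2$ does not help, since ${\mathcal E}_2$ also contains indefinite terms of the same size. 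Moreover the final absorption you describe does not work as stated: interpolating $L^{4\si_1+2}$ between $L^{2-2\alpha}$ and a high Lebesgue norm gives $\int|u|^{4\si_1+2}\lesssim\big(\int|u|^{2-2\alpha}\big)^{s}$ with $s<1$, which cannot be bounded by $C\int|u|^{2-2\alpha}$ when the latter is small, and a Young inequality reinstates exactly the additive constant you are trying to remove. The focusing and $a$-damping contributions are indeed harmless ($\lesssim\|u\|_{L^2}^2\lesssim{\mathcal E}_2$ by Gagliardo--Nirenberg and the first bullet), but the sublinear term is a genuine obstruction to the constant-free bound $\|\d_t u\|_{L^2}^2\le C{\mathcal E}_2$ along your decomposition; the paper's identity avoids ever having to estimate it.
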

\begin{proof}
  The first two terms in  $\mathcal
  E_2$ correspond to the definition of $\|u(t)\|_{\Sigma^2}^2$, up to
  irrelevant multiplying constants. The third term is non-negative,
  and is controlled by  $\|u(t)\|_{\Sigma^2}^2$, as shown by an
  integration by parts. 

The three terms on the second line in the definition of $\mathcal
E_2$  are non-negative. The first two terms are controlled  by some power of
$\|u(t)\|_{H^1}$, which is 
uniformly bounded from Theorem~\ref{theo:exist}. The third term is
controlled by the $L^2$-norm of $u$ if $M$ is compact, thanks to
Lemma~\ref{lem:GNdual} if $M=\R^2$ and $\alpha<1/2$. If $M=\R^2$ and
$\alpha=1/2$, it is easy to check that
\begin{equation}\label{eq:GNdual2}
  \|f\|_{L^1(\R^2)}\le
  C\|f\|_{L^2(\R^2)}^{1/2}\||x|^2f\|_{L^2(\R^2)}^{1/2},\quad \forall f\in
  \Sigma^2. 
\end{equation}
Since $\Delta V$ is bounded, $\int |u|^2 \Delta V$ is equivalent to
$\|u\|_{L^2}^2$. The last two terms on the third line are both
controlled as follows: for $0<\eps<1$,
\begin{equation*}
  \int |u|^{2\si_1}|\nabla u|^2 \le \|u\|_{L^{2/\eps}}^{2\si_1}
  \|\nabla u\|_{L^{2/(1-\eps)}}^2\lesssim \|u\|_{H^1}^{2\si_1}\|\nabla
  u\|_{L^2}^{2(1-\eps)} \|\Delta u\|_{L^2}^{2\eps},
\end{equation*}
where we have used Gagliardo-Nirenberg inequality (Lemma~\ref{lem:GN})
applied to $\nabla u$ for the last inequality. The first term of the
fourth line is controlled in exactly the same fashion, by simply
replacing $\si_1$ with $\si_2$. 

By Cauchy-Schwarz inequality, we have
\begin{equation*}
 \left| \IM\int \frac{\bar  u}{|u|^{\alpha}} \Delta u \right| \le
 \|u\|_{L^{2-2\alpha}}^{1-\alpha} \|\Delta u\|_{L^2}. 
\end{equation*}
If $M$ is compact, we conclude by H\"older inequality,
\begin{equation*}
  \|u\|_{L^{2-2\alpha}} \le |M|^{\alpha/(4-2\alpha)}\|u\|_{L^2}.
\end{equation*}
If $M=\R^2$, we proceed as above, by either invoking
Lemma~\ref{lem:GNdual} if $\alpha<1/2$, or \eqref{eq:GNdual2} if
$\alpha=1/2$. 

Finally, Cauchy-Schwarz inequality and Sobolev embedding yield.
\begin{equation*}
  \int V
    |u|^{2\si_1+2} \le \|V u\|_{L^2}
    \|u\|_{L^{4\si_2+2}}^{2\si_2+1}\le C \|u\|_{\Sigma}^{2\si_1+2},
\end{equation*}
hence the first point of the lemma. 
\smallbreak

For the second point, recall that we also have, by construction,
\begin{equation*}
  \mathcal E_2(t) = \|\d_t u\|_{L^2}^2 -\l \IM\int |u|^{2\si_1}u\d_t
  \bar u -\frac{\l \si_1}{\si_1+1}\int  V |u|^{2\si_1+2}.
\end{equation*}
We have just seen that the last term is estimated as
\begin{equation*}
  \int V
    |u|^{2\si_1+2} \lesssim \|u\|_{\Sigma}^{2\si_1+2}.
\end{equation*}
For the second term, Cauchy-Schwarz inequality, Sobolev embedding and
Young inequality yield
\begin{align*}
 |\l | \left| \IM\int |u|^{2\si_1}u\d_t
  \bar u\right| &\le |\l  | \|\d_t
u\|_{L^2}\|u\|_{L^{4\si1+2}}^{2\si_1+1}\lesssim \|\d_t
u\|_{L^2}\|u\|_{\Sigma}^{2\si_1+1}\\
&\le \eps \|\d_t
u\|_{L^2}^2 + \frac{C}{\eps} \|u\|_{\Sigma}^{4\si_1+2},
\end{align*}
hence the second point of the lemma by choosing $\eps=1/2$.
\end{proof}

\section{Finite time extinction in 2D}
\label{sec:two-end}
We recall the celebrated Br\'ezis-Gallou\"et inequality, established
in \cite{BrGa80}. 
\begin{lemma}[Br\'ezis-Gallou\"et inequality]\label{lem:brezis-gallouet}
  Let $d=2$ in Assumption~\ref{hyp:main}. There exists $C$ such that
  for all $f\in H^2(M)$, 
  \begin{equation*}
    \|f\|_{L^\infty(M)}\le C\(\|f\|_{H^1(M)} \sqrt{ \ln \(2+
      \|f\|_{H^2(M)} \)}+1\). 
  \end{equation*}
\end{lemma}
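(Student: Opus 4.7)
The plan is to follow the classical Fourier-analytic argument of Br\'ezis--Gallou\"et: split $f$ into low- and high-frequency parts at a cutoff $N\ge 2$, estimate each by Cauchy--Schwarz against the natural Sobolev weight, and then optimize $N$ at the end.

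First I would treat the case $M=\R^2$. By Fourier inversion, $\|f\|_{L^\infty(\R^2)}\le (2\pi)^{-1}\|\widehat{f}\|_{L^1(\R^2)}$. I would split the integral in $\xi$ at $|\xi|=N$ and apply Cauchy--Schwarz on each piece:
\begin{align*}
\int_{|\xi|\le N}|\widehat{f}(\xi)|\,d\xi &\le \left(\int_{|\xi|\le N}\frac{d\xi}{1+|\xi|^2}\right)^{1/2}\|f\|_{H^1(\R^2)},\\
\int_{|\xi|>N}|\widehat{f}(\xi)|\,d\xi &\le \left(\int_{|\xi|>N}\frac{d\xi}{(1+|\xi|^2)^2}\right)^{1/2}\|f\|_{H^2(\R^2)}.
\end{align*}
Passing to polar coordinates in dimension two, the first weighted integral is $O(\ln N)$ while the second is $O(1/N^2)$. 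Choosing $N=2+\|f\|_{H^2(\R^2)}$ makes the high-frequency contribution bounded by a universal constant and delivers exactly the stated inequality.

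For the compact case, I would localize via a finite partition of unity $\{\chi_j\}$ subordinate to coordinate charts on $M$: each $\chi_j f$, pushed forward through the corresponding chart, becomes a compactly supported function on $\R^2$ whose $H^k(\R^2)$ norm is equivalent to $\|\chi_j f\|_{H^k(M)}\le C\|f\|_{H^k(M)}$, with constants depending only on the charts and the metric. Applying the $\R^2$ result to each $\chi_j f$ and summing gives the conclusion. An alternative is to work directly with the spectral resolution of $-\Delta_g+1$, splitting at a spectral threshold $N^2$; the analogues of the two Cauchy--Schwarz bounds then rest on Weyl's law for the eigenvalue counting function in two dimensions.

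The main obstacle lies in the compact-manifold case: both the localization and the spectral route require some bookkeeping (Sobolev norm equivalence under charts, or pointwise control of spectral clusters) to guarantee constants independent of $f$. The single structural input driving the whole statement is the two-dimensional logarithmic divergence $\int_{|\xi|\le N} d\xi/(1+|\xi|^2)\sim \ln N$: this is the unique source of the $\sqrt{\ln}$ factor, and it fails in any other dimension, which is exactly why the assumption $d=2$ appears in the lemma.
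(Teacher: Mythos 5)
Your proposal is correct: the frequency-splitting argument with Cauchy--Schwarz against the weights $(1+|\xi|^2)^{-1}$ and $(1+|\xi|^2)^{-2}$ and the choice $N=2+\|f\|_{H^2}$ is the classical proof on $\R^2$, and the chart localization (or spectral splitting) reduces the compact case to it, the only routine point left implicit being $\ln(2+C\|f\|_{H^2})\lesssim \ln(2+\|f\|_{H^2})$ when absorbing chart constants. The paper itself gives no proof---it simply recalls the inequality from the cited work of Br\'ezis and Gallou\"et---so your argument is precisely the standard proof underlying that citation.
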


Recall that by construction, the time derivative of $\mathcal E_2$ is
given by Proposition~\ref{prop:dtu}. Since the last two lines are
non-negative, and noticing that all the terms in the second line can
be estimated in a common fashion, we have:
\begin{equation}\label{eq:dotE2}
  \dot {\mathcal E}_2 \lesssim \int |u|^{2\si_1-1} |\d_t u| |\nabla
  u|^2 + \int |u|^{2\si_1+2\si_2 +1}|\d_t u| + \int
  |u|^{2\si_1-\alpha+1}|\d_t u |.  
\end{equation}
The first term is controlled, up to a multiplicative constant, by 
\begin{equation*}
  \|u\|_{L^\infty}^{2\si_1-1} \|\nabla u\|_{L^4}^2\|\d_t
  u\|_{L^2}\lesssim \|u\|_{L^\infty}^{2\si_1-1} \|\Delta u\|_{L^2} \|\d_t
  u\|_{L^2},
\end{equation*}
where we have used Gagliardo-Nirenberg inequality. Using
Lemma~\ref{lem:E2}, we infer
\begin{equation*}
  \int |u|^{2\si_1-1} |\d_t u| |\nabla
  u|^2 \lesssim \|u\|_{L^\infty}^{2\si_1-1} {\mathcal E}_2. 
\end{equation*}
Br\'ezis-Gallou{\"e}t inequality implies:
\begin{equation}\label{eq:step1}
   \int |u|^{2\si_1-1} |\d_t u| |\nabla
  u|^2 \lesssim \(\|u\|_{\Sigma}\sqrt{\ln (2+\mathcal E_2)}+1\)^{2\si_1-1} {\mathcal E}_2. 
\end{equation}
The last two terms in \eqref{eq:dotE2} are estimated thanks to
Cauchy-Schwarz inequality, Sobolev embedding and the second point in
Lemma~\ref{lem:E2}: 
\begin{equation*}
  \int |u|^{2\si_1+2\si_2 +1}|\d_t u| + \int
  |u|^{2\si_1-\alpha+1}|\d_t u | \lesssim \(
  \|u\|_\Sigma^{2\si_1+2\si_2 +1} + \|u\|_\Sigma^{2\si_1
    +1-\alpha}\)\mathcal E_2.  
\end{equation*}
Along with \eqref{eq:step1}, \eqref{eq:dotE2} then yields
\begin{equation*}
   \dot {\mathcal E}_2 \le K(\|u\|_{\Sigma})\( 1+ \ln (2+\mathcal
   E_2)\)^{\si_1-1/2} \( 2+  {\mathcal E}_2 \),
\end{equation*}
where $K(\cdot)$ denotes a continuous function. Integrating in time, we infer that 
\begin{equation*}
F(t):=
\left\{
\begin{aligned}
\(  1+ \ln (2+\mathcal
   E_2(t))\)^{3/2-\si_1}&\quad \text{if}\quad\si_1<\frac{3}{2},\\
\ln \(  1+ \ln (2+\mathcal
   E_2(t))\)&\quad \text{if}\quad\si_1=\frac{3}{2},
\end{aligned}
\right.
\end{equation*}
is controlled by $F(0) + t K(\|u_0\|_{\Sigma})$, where we have used also
Proposition~\ref{prop:apriori} (after passing to the limit $\delta\to
0$), up to changing the continuous function $K$. In order to ease
notations, we now denote by $K_j$ any positive continuous 
function of $\|u_0\|_{\Sigma^j}$, which may change from line to line,
but only finitely many times. 

\smallbreak
\noindent {\bf Case $\si_1<3/2$.} In this case, the control on $F$
yields, along with Lemma~\ref{lem:E2},
\begin{equation*}
  \|u(t)\|_{H^2} \le K_2 e^{t^\frac{2}{3-2\si_1}
    K_1}. 
\end{equation*}
Nash inequality \eqref{eq:nash2} then implies
\begin{equation*}
  \|u(t)\|_{L^2}\le K_2
  \|u(t)\|_{L^{2-\alpha}}^{2(2-\alpha)/(4-\alpha)}
  e^{t^\frac{2}{3-2\si_1}   K_1}.  
\end{equation*}
Let $\theta=1-\alpha/4$. The above inequality and \eqref{eq:mass}
yield
\begin{equation*}
  \frac{d}{dt}\|u(t)\|_{L^2}^2 \le
  -2\|u(t)\|_{L^{2-\alpha}}^{2-\alpha} \le -  K_2
  e^{-t^\frac{2}{3-2\si_1}   K_1}\|u(t)\|_{L^2}^{2\theta},
\end{equation*}
hence
\begin{equation*}
   \frac{d}{dt}\|u(t)\|_{L^2}^{2(1-\theta)} \le -  K_2
  e^{-t^\frac{2}{3-2\si_1}   K_1}. 
\end{equation*}
By integration, we infer
\begin{equation}\label{eq:3pm}
  \|u(t)\|_{L^2}^{2(1-\theta)} \le \|u_0\|_{L^2}^{2(1-\theta)} -
  K_2 \int_0^t   e^{-\tau^\frac{2}{3-2\si_1}  K_1}d\tau. 
\end{equation}
By changing variables in the integral, note that there exists a
${\bf K_2}=K(\|u_0\|_{\Sigma^2})$ such that 
\begin{equation*}
   K_2 \int_0^\infty   e^{-\tau^\frac{2}{3-2\si_1}
     K_1}d\tau
 ={\bf K_2} \int_0^\infty
 e^{-\tau^\frac{2}{3-2\si_1}}d\tau,
\end{equation*}
where the integrals are obviously finite, and the last one is
independent of $u_0$. We conclude that if
\begin{equation}\label{eq:CIsmall}
  \|u_0\|_{L^2}^{2(1-\theta)} -{\bf K_2} \int_0^\infty
 e^{-\tau^\frac{2}{3-2\si_1}}d\tau<0,
\end{equation}
then there for $t$ sufficiently large, the right hand side in
\eqref{eq:3pm} becomes zero. Therefore, there exists some finite time $T>0$ such that
$\|u(T)\|_{L^2}=0$. Since \eqref{eq:CIsmall} corresponds to a
smallness assumption on $\|u_0\|_{L^2}$ when $\|u_0\|_{\Sigma^2}$ is
fixed, the second point in  Theorem~\ref{theo:two} follows in the case
$1/2\le \si_1<3/2$.  
\smallbreak
\noindent {\bf Case $\si_1=3/2$.} The control on $F$ now leads to a
control by a double exponential:
\begin{equation*}
  \|u(t)\|_{H^2}\le  \exp \(K_2 e^{K_1 t}\). 
\end{equation*}
In the same fashion as above, we infer
\begin{equation*}
   \frac{d}{dt}\|u(t)\|_{L^2}^{2(1-\theta)} \le - \exp \(-K_2 e^{K_1 t}\),
\end{equation*}
hence
\begin{equation*}
  \|u(t)\|_{L^2}^{2(1-\theta)} \le \|u_0\|_{L^2}^{2(1-\theta)} -
   \int_0^t  \exp \(-K_2 e^{K_1 \tau}\) d\tau. 
\end{equation*}
We have
\begin{align*}
  \int_0^\infty  \exp \(-K_2 e^{K_1 \tau}\) d\tau =
  \frac{1}{K_1}\int_0^\infty \exp \(-K_2 e^{\tau}\) d\tau& = 
\frac{1}{K_1}\int_{\ln K_2}^\infty \exp \( -e^{\tau}\) d\tau\\
&\ge
\frac{1}{K_1(R)}\int_{\ln K_2(R)}^\infty \exp \( -e^{\tau}\) d\tau,
\end{align*}
that is, a constant which depends only on $R$ provided that
$\|u_0\|_{\Sigma^2}\le R$. Finite time extinction then follows as soon
as
\begin{equation*}
  \|u_0\|_{L^2}^{2(1-\theta)} < \frac{1}{K_1(R)}\int_{\ln
    K_2(R)}^\infty \exp \( -e^{\tau}\) d\tau. 
\end{equation*}

 \bibliographystyle{siam}
\bibliography{biblio}

\begin{thebibliography}{10}

\bibitem{AdAtCa06}
{\sc S.~Adly, H.~Attouch, and A.~Cabot}, {\em Finite time stabilization of
  nonlinear oscillators subject to dry friction}, in Nonsmooth mechanics and
  analysis, vol.~12 of Adv. Mech. Math., Springer, New York, 2006,
  pp.~289--304.

\bibitem{Ag82}
{\sc S.~Agmon}, {\em Lectures on exponential decay of solutions of second-order
  elliptic equations: bounds on eigenfunctions of {$N$}-body {S}chr\"odinger
  operators}, vol.~29 of Mathematical Notes, Princeton University Press,
  Princeton, NJ; University of Tokyo Press, Tokyo, 1982.

\bibitem{AnCaSp-p}
{\sc P.~Antonelli, R.~Carles, and C.~Sparber}, {\em On nonlinear
  {S}chr{\"o}dinger type equations with nonlinear damping}, Int. Math. Res.
  Not.
\newblock To appear.

\bibitem{AnMa09}
{\sc P.~Antonelli and P.~Marcati}, {\em On the finite energy weak solutions to
  a system in quantum fluid dynamics}, Comm. Math. Phys., 287 (2009),
  pp.~657--686.

\bibitem{AnSp10}
{\sc P.~Antonelli and C.~Sparber}, {\em Global well-posedness for cubic {NLS}
  with nonlinear damping}, Comm. Partial Differential Equations, 35 (2010),
  pp.~2310--2328.

\bibitem{BrGa80}
{\sc H.~Br{\'e}zis and T.~Gallouet}, {\em Nonlinear {S}chr\"odinger evolution
  equations}, Nonlinear Anal., 4 (1980), pp.~677--681.

\bibitem{BGT}
{\sc N.~Burq, P.~G\'erard, and N.~Tzvetkov}, {\em Strichartz inequalities and
  the nonlinear {S}chr\"odinger equation on compact manifolds}, Amer. J. Math.,
  126 (2004), pp.~569--605.

\bibitem{CaDaSa12}
{\sc R.~Carles, R.~Danchin, and J.-C. Saut}, {\em Madelung,
  {G}ross-{P}itaevskii and {K}orteweg}, Nonlinearity, 25 (2012),
  pp.~2843--2873.

\bibitem{CaGa11}
{\sc R.~Carles and C.~Gallo}, {\em Finite time extinction by nonlinear damping
  for the {S}chr{\"o}dinger equation}, Comm. Part. Diff. Eq., 36 (2011),
  pp.~961--975.

\bibitem{CaMi04}
{\sc R.~Carles and L.~Miller}, {\em Semiclassical nonlinear {S}chr\"odinger
  equations with potential and focusing initial data}, Osaka J. Math., 41
  (2004), pp.~693--725.

\bibitem{CazCourant}
{\sc T.~Cazenave}, {\em Semilinear {S}chr\"odinger equations}, vol.~10 of
  Courant Lecture Notes in Mathematics, New York University Courant Institute
  of Mathematical Sciences, New York, 2003.

\bibitem{Fujiwara}
{\sc D.~Fujiwara}, {\em Remarks on the convergence of the {F}eynman path
  integrals}, Duke Math. J., 47 (1980), pp.~559--600.

\bibitem{GV85c}
{\sc J.~Ginibre and G.~Velo}, {\em The global {C}auchy problem for the
  nonlinear {S}chr\"odinger equation revisited}, Ann. Inst. H. Poincar\'e Anal.
  Non Lin\'eaire, 2 (1985), pp.~309--327.

\bibitem{Yu63}
{\sc V.~I. Judovi{\v{c}}}, {\em Non-stationary flows of an ideal incompressible
  fluid}, \u Z. Vy\v cisl. Mat. i Mat. Fiz., 3 (1963), pp.~1032--1066.

\bibitem{Ka72}
{\sc T.~Kato}, {\em Schr\"odinger operators with singular potentials}, Israel
  J. Math., 13 (1972), pp.~135--148.

\bibitem{Kato87}
{\sc T.~Kato}, {\em On nonlinear {S}chr\"odinger equations}, Ann. IHP (Phys.
  Th\'eor.), 46 (1987), pp.~113--129.

\bibitem{KavianWeissler}
{\sc O.~Kavian and F.~Weissler}, {\em Self-similar solutions of the
  pseudo-conformally invariant nonlinear {S}chr\"odinger equation}, Michigan
  Math. J., 41 (1994), pp.~151--173.

\bibitem{Na58}
{\sc J.~Nash}, {\em Continuity of solutions of parabolic and elliptic
  equations}, Amer. J. Math., 80 (1958), pp.~931--954.

\bibitem{OgOz91}
{\sc T.~Ogawa and T.~Ozawa}, {\em Trudinger type inequalities and uniqueness of
  weak solutions for the nonlinear {S}chr\"odinger mixed problem}, J. Math.
  Anal. Appl., 155 (1991), pp.~531--540.

\bibitem{Oz95}
{\sc T.~Ozawa}, {\em On critical cases of {S}obolev's inequalities}, J. Funct.
  Anal., 127 (1995), pp.~259--269.

\bibitem{OzVi-p}
{\sc T.~Ozawa and N.~Visciglia}, {\em An improvement on the
  {B}r\'ezis-{G}allou{\"e}t technique for {2D} {NLS} and {1D} half-wave
  equation}.
\newblock Preprint, archived as \url{http://arxiv.org/abs/1403.7443}.

\bibitem{Taylor3}
{\sc M.~Taylor}, {\em Partial differential equations. {III}}, vol.~117 of
  Applied Mathematical Sciences, Springer-Verlag, New York, 1997.
\newblock Nonlinear equations.

\end{thebibliography}

\end{document}